\newunit\printfield{pages}} 
\newtheorem{theorem}{Theorem}[section]
\newtheorem{lemma}[theorem]{Lemma}
\newtheorem{proposition}[theorem]{Proposition}
\newtheorem{corollary}[theorem]{Corollary}
\theoremstyle{thmstyletwo}%
\newtheorem{remark}[theorem]{Remark}
\theoremstyle{thmstylethree}%
\numberwithin{equation}{section}
\def\Authorfont{\reset@font\fontsize{12bp}{14.5bp}\selectfont\titraggedcenter}%
\newcommand{\pd}[2][]{
\if\relax#1\relax
 \partial_{#2}%
\else
 \partial_{#2}^{#1}%
\fi}
\def\D{\mathrm{d}}
\def\gamm{\gamma}
\def\dcrit{d_{\textrm{c}}}
\def\ds{d_{\textrm{s}}}
\def\taust{\tau_{*}}
\def\Disp{\sigma}
\def\DispOld{\mathring{\sigma}}
\def\dUd{\kappa}
\def\mddU{a}
\def\nmo{n_{-}}
\def\DN{\mathcal{D}_{\mathcal{N}}}
\def\Obig{\mathcal{O}}
\begin{document}

\baselineskip=4.4mm

\date{}

\newcommand{\emailnote}[1]{\textsuperscript{*}\footnotetext[1]{#1}}

\title{On the exchange of stability for the subcritical laminar flow}

\author[1,2]{Vladimir Kozlov}
\author[3]{Oleg Motygin}


\affil[1]{Department of Mathematics, Link\"oping University, Link\"oping, SE-581 83, Sweden}

\affil[2]{Department of Mathematics and Computer Sciences, St Petersburg State University, St~Petersburg, 199034, Russia}

\affil[3]{Institute for Problems in Mechanical Engineering, Russian Academy of Sciences, St~Petersburg, 199178, Russia\protect\\[2mm]
{\small{}E-mail: \protect\href{mailto:vladimir.kozlov@liu.se}{vladimir.kozlov@liu.se}, \protect\href{mailto:mov@ipme.ru}{mov@ipme.ru}, \protect\href{mailto:o.v.motygin@gmail.com}{o.v.motygin@gmail.com}}}

\maketitle

\vspace{-12mm}

\begin{abstract}
We consider steady water waves in a two-dimensional channel bounded below by a flat, rigid bottom and above by a free surface. Surface tension is neglected, and the flow is rotational with constant vorticity $a$. We analyze an analytic branch of Stokes waves bifurcating from a subcritical laminar flow, with the wave period serving as the bifurcation parameter. Along this branch, the first eigenvalue of the Fr\'{e}chet derivative remains negative. Our main focus is the second eigenvalue; its sign plays a crucial role in the analysis of subharmonic bifurcations. This small eigenvalue determines the validity of the principle of exchange of stabilities: a positive sign confirms it, while a negative sign indicates its violation. Furthermore, a positive second eigenvalue corresponds to an increasing period along the bifurcation curve near the critical point, whereas a negative sign implies period decrease. We investigate how the sign of the second eigenvalue depends on the Bernoulli constant $R$ (equivalently, the laminar flow depth $d$) and the vorticity $a$. We show that for each $a$ there exists a critical depth $d_0(a)$ such that the second eigenvalue is positive for $d<d_0(a)$ and negative for $d>d_0(a)$. In the laminar flow, a stagnation point forms when the depth exceeds a threshold $\ds(a)$. We demonstrate that $d_0(a) < \ds(a)$ for $a > a_0 \approx -1.01803$, whereas $d_0(a) > \ds(a)$ for $a < a_0$.  We also verify the property of formal stability by a description of the domain in $(a,d)$ variables, where this property holds. Numerical illustrations  of these properties are presented in the paper.

{\small

\vspace{2mm}

\noindent\textbf{Keywords:} nonlinear water wave theory, steady water waves, constant vorticity, stability properties, exchange of stability\\[2mm]
\textbf{MSC classification:} 35Q35, 35B32, 76B15, 76D07

}

\end{abstract}

\section{Introduction}

We consider steady surface waves in a two-dimensional channel bounded below by a flat,
rigid bottom and above by a free surface that does not touch the bottom. The surface tension is neglected and the water motion can be rotational.
In appropriate Cartesian coordinates $(X,Y )$, the bottom coincides with the line
$Y=0$ and gravity acts in the negative $Y$-direction. We choose the frame of reference so that the velocity field is time-independent as well as the free-surface, which is located in the half-plane $Y>0$.   We introduce the water domain
$$
\mathcal{D}=\mathcal{D}_\xi=\{(X,Y\,:\, X\in\mathbb{R},\;\;0<Y<\xi(X) \}
$$
and the free surface
$$
\mathcal{S}=\mathcal{S}_\xi=\{(X,Y)\,:\,X\in \mathbb{R},\;\;Y=\xi(X)\}.
$$
The function $\xi$  is  even and periodic. We denote the period by $\Lambda$.

 To describe the flow inside $\mathcal{D}$, we use a stream function $\Psi$. Then the velocity vector is given by  $(-\Psi_Y,\Psi_X)$. Since the surface tension is neglected, the function $\Psi$  after a certain scaling, satisfies the following free-boundary problem (see for example \cite{CSst} and \cite{KN14}):
\begin{equation}\label{K2a}
\begin{aligned}
&\Delta \Psi+\omega(\Psi)=0\quad\mbox{in\!}\quad\mathcal{D},\\
&\frac{1}{2}|\nabla\Psi|^2+\xi=R\quad\mbox{on\!}\quad\mathcal{S},\\
&\Psi=1\quad\mbox{on\!}\quad\mathcal{S},\\
&\Psi=0\quad\mbox{for\!}\quad Y=0,
\end{aligned}
\end{equation}
where $\omega$ is a vorticity function, $R$ is the Bernoulli constant.
We always assume that the function $\Psi$ is even and $\Lambda$-periodic with respect to $X$ as well as $\xi$. Since we will use the period as a parameter of the problem it is convenient to make the following change of the variable
$$
x=\lambda X,\quad y=Y\quad\mbox{and}\quad\lambda=\frac{\Lambda_0}{\Lambda},\quad \psi(x,y)=\Psi\Bigl(\frac{x}{\lambda},y\Bigr),\quad\eta(x)=\xi\Bigl(\frac{x}{\lambda}\Bigr).
$$
Here $\Lambda_0$ is a fixed period which will be chosen later.
The problem (\ref{K2a}) becomes
\begin{equation}\label{Okt31a}
\begin{aligned}
&(\lambda^2\partial_x^2+\partial_y^2)\psi+\omega(\psi)=0\quad\mbox{in\!}\quad D_\eta,\\[1.5mm]
&\psi(x,0)=0,\quad\psi(x,\eta(x))=1\quad\mbox{for\!}\quad x\in\mathbb{R},\\
&\frac{1}{2}\bigl(|\partial_y\psi|^2+\lambda^2|\partial_x\psi|^2\bigr)+\eta=R\quad\mbox{on\!}\quad S_\eta,
\end{aligned}
\end{equation}
where $ D_\eta=\{(x,y)\,:\, x\in\mathbb{R},\;0<y<\eta(x) \}$ and $S_\eta=\{(x,y)\,:\,x\in \mathbb{R},\;y=\eta(x)\}$.
Now the functions $\psi$ and $\eta$ have the same period $\Lambda_0$.

To construct and study bifurcations of solutions to the problem (\ref{Okt31a}), an important role is played by the Fr\'{e}chet derivative of the corresponding nonlinear problem.  As this derivative requires a fixed domain, we first flatten the fluid domain using an appropriate change of variables and compute the derivative in the transformed coordinates. The corresponding spectral problem in $(x,y)$-coordinates reads (see Sect.~\ref{SecJan16a})
\begin{equation}\label{Au5a}
\begin{aligned}
& (\lambda^2\partial_x^2+\partial_y^2+\omega'(\psi))v=0\quad\mbox{in\!}\quad D_\eta,\\
&v=0\quad\mbox{for\!}\quad y=0,
\end{aligned}
\end{equation}
and
\begin{equation}\label{Dec27a}
\lambda^2\psi_xv_x+\psi_yv_y-\frac{\widehat{\rho}}{\psi_y} v=\frac{\mu}{\psi_y} v\quad\mbox{on\!}\quad S_\eta,
\end{equation}
where
\begin{equation}
\label{eq:rhohatdef}
\widehat{\rho}=1+\lambda^2\psi_{xy}\psi_x+\psi_{yy}\psi_y.
\end{equation}
All coefficients in this spectral problem are well defined if there are no stagnation points on the free surface. The spectrum of the above spectral problem  is bounded from below and consists of eigenvalues of finite multiplicity. These spectral properties follow from the standard theory of compact self-adjoint operators for elliptic boundary value problems. The self-adjointness of the associated Dirichlet--Neumann operator is explicitly demonstrated in Sect.~\ref{SFeb7c} via the Green formula, while compactness arises from elliptic regularity and Sobolev embeddings on the periodic cell. For a standard reference, see, e.g.\ \cite[\S\,8.12]{GilbargTrudinger1983}.

In the paper we consider mostly the case $\omega=a$, where $a$ is a constant. We refer to the papers \cite{CVar} and \cite{CSrVar}, where an explanation of importance of this problem can be found together with many references. Usually in the bifurcation analysis of such problems it is assumed that the period is fixed. We assume that the Bernoulli constant is fixed and consider the period as a parameter of the problem (see \cite{KN11a}). According to \cite{KN14, Var23}, there exists a branch of Stokes waves $\Xi(t)=(\psi(x,y;t),\eta(x;t),\Lambda(t))$, $t\geq 0$,  starting from a subcritical laminar flow $(U,d)$, where $U$ depends only on the vertical variable $y$ and the constant $d>\dcrit$, with $\dcrit$ being the value at which the function
$$
\mathcal{R}(d)=\frac{1}{2}\Bigl(\frac{1}{d^2}-a+\frac{a^2d^2}{4}\Bigr)+d
$$
attains its minimum, and $R=\mathcal{R}(d)$. The branch is analytically parameterized by the parameter $t$. For our results it is sufficient to consider this branch only for small $t$.

Due to subcriticality, there is a frequency $\taust$, or the period $\Lambda_*=2\pi/\taust$ (we take then $\Lambda_0=\Lambda_*$), at which the bifurcation occurs.  It is shown in Sect.~\ref{SecJan16b} that the first eigenvalue of the Fr\'{e}chet derivative is always negative for all $t$. Denote by $\mu(t)$  the second  eigenvalue of the Fr\'{e}chet derivative  at the point $(\psi(t),\eta(t),\Lambda(t))$ (we will omit sometime other variables if they do not play role in the exposition). The functions $\Lambda(t)$ and $\mu(t)$ can be represented as follows:
$$
\mu(t)=\mu_2t^2+\Obig(t^4),\quad\Lambda(t)=\Lambda_*+\Lambda_2t^2+\Obig(t^4)
$$
and
$$
\lambda(t):=\frac{\Lambda_*}{\Lambda(t)}=1+\lambda_2t^2+\Obig(t^4).
$$
We prove that always
\begin{equation*}
\mu_2=-A\lambda_2,\quad A=U'(d)^2\taust H(\taust d),
\end{equation*}
where $H(z)$ is positive for $z>0$, see \eqref{Dec6ca} in Sect.~\ref{sect:mu2}. The validity of the principle of the exchange of stability at the bifurcation point $\taust$ means that $\mu_2>0$ (see \cite{CrRab,Ki1}).

\begin{figure}[t!]
\centering\vspace{1.25mm}
 \SetLabels
 \L (-0.03*0.8) $d$\\
 \L (0.41*0.35) $\dcrit$\\
 \L (0.375*0.48) $\ds$\\
 \L (0.62*0.48) $\ds$\\
 \L (0.36*0.6) $d_0$\\
 \L (0.86*-0.02) $a$\\
 \L (0.44*0.0) $a_0$\\
 \L (0.18*0.34) $M_+$\\
 \endSetLabels
 \leavevmode\AffixLabels{\includegraphics[width=80mm]{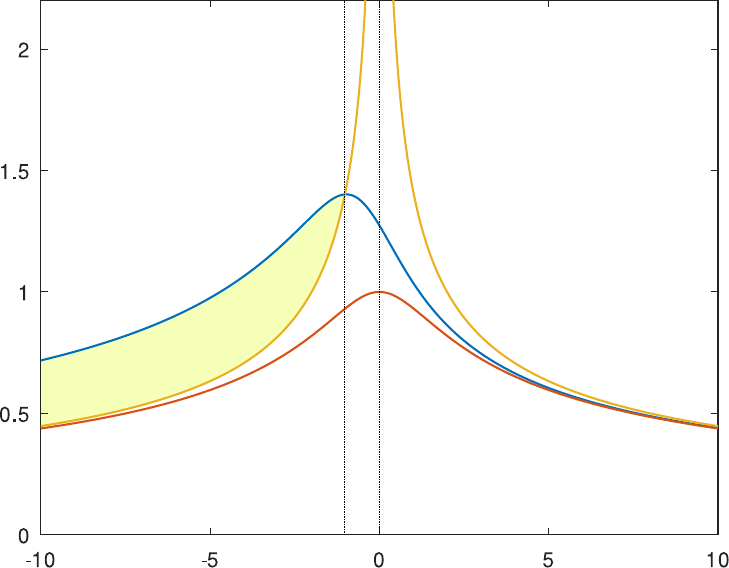}}\vspace{1mm}
 \caption{Positivity of $\mu_2(a,d)$ when the laminar flow $U$ is not unidirectional. Here $\dcrit(a)$ is the critical value of $d$, $\ds(a)=\sqrt{2/|a|}$ the depth when the laminar flow has a stagnation point on the surface or at the bottom, and $d_0(a)$ is the root of $\mu_2(a,d_0)=0$.
 The curves $\ds(a)$ and $d_0(a)$ intersect at $a=a_0\approx-1.01803$.}
\label{fig:mu2_pos_cf}
\end{figure}

In Sect.~\ref{SJa22b},~\ref{sect:mu2} we analyze the sign of $\mu_2$ as a function of $a$ and $d$. We prove that $\mu_2(d,a)$ is positive for $d$ near $\dcrit(a)$ and negative for large $d$, ensuring that the equation $\mu_2(a,d)=0$ has at least one root for every fixed $a$. Our numerical results support the conjecture that this root, $d_0(a)$, is unique, with $\mu_2(a,d)>0$ for $d\in(\dcrit(a),d_0(a))$ and $\mu_2(a,d)<0$ for $d>d_0(a)$.

Of particular interest to us is the sign of $\mu_2$ in the case of counter-current flow. Introduce the quantity $\ds(a)=\sqrt{2/|a|}$, which is the depth of the laminar flow with a stagnation point at $y=d$ for $a>0$ and at $y=0$ for $a<0$. As stated in Sec.~\ref{SAu13a}, for $d>\ds(a)$, the laminar flow $(U,d)$ has counter-current flow near the bottom if $a<0$, and near the surface if $a>0$. In the latter case $\mu_2(a,d)<0$ for all $d>\ds$. Meanwhile, for $a<0$ there is a domain $(a,d)$ such that $a<a_0\approx-1.01803$ and $d_0(a)<d<\ds(a)$ where $\mu_2>0$. This domain $M_+$, where $\mu_2$ is positive for laminar flows with  counter-currents, is shown in Fig.~\ref{fig:mu2_pos_cf}.

The sign of $\mu_2$ plays important role in study of subharmonic bifurcations of Stokes waves, see \cite{Koz1a}. It was proved there that if $\mu_2(t)$ is positive for small $t$ along a bifurcation curve and then changes sign then subharmonic bifurcations occur near such point. The water waves appearing due to subharmonic bifurcations were one of the first examples of waves different from the Stokes and solitary waves. Their existence in irrotational case was proved in \cite{BDT1} and \cite{BDT2}.
In the case $a=0$ the sign of $\mu_2$ was studied in \cite{Koz23}. In \cite{Koz26} it was proved the following ``stability'' property: if $\mu_2>0$ then there are no even, periodic solutions with a multiple period in a small neighborhood of the Stokes wave considered in the space of solutions with the multiple period except the Stokes solution.

Important application of the positivity of the  sign of $\mu_2$ can be found in the study of stability of water waves with fixed period, see \cite{CSst2} and \cite{DLZ}. Different types of stability was discussed in the introduction to \cite{CSst2}, in particular Benjamin and Feir instability, which was analysed in \cite{BMV23}. In Sect.~\ref{SFeb7c} we present an application for study of the formal stability introduced in \cite[Sect.~4]{CSst2}. We show that formal stability holds in a certain domain of the $(a,d)$-plane and provide a description of this domain.

We also mention the work \cite{CKSch15}, where second and third order asymptotic expansions were derived for the wave profile and internal flow characteristics. While their analysis focuses on the approximation of the solution itself, we utilize asymptotic expansions to study the stability properties and the sign of the second eigenvalue of the Fr\'{e}chet derivative.


\section{Fr\'{e}chet derivative}\label{SecJan16a}

In this section we define the Fr\'{e}chet derivative of the operator corresponding to the problem (\ref{K2a})
in a fixed domain, obtained after application the flattering change of variables. After that, we evaluate the Fr\'{e}chet derivative in $(x,y)$ variables, which leads to the spectral problem (\ref{Au5a}), (\ref{Dec27a}).

We  use the flattening change of variables
$$
\hat{x}=x,\quad\hat{y}=\frac{dy}{\eta(x)},
$$
to reduce the problem to a fix strip
$$
Q=\{(\hat{x},\hat{y})\,:\,\hat{x}\in \mathbb{R},\;0<\hat{y}<d\}.
$$
Since
$$
\partial_{x}=\partial_{\hat{x}}-\frac{\hat{y}\eta'}{\eta}\partial_{\hat{y}},\quad\partial_y=\frac{d}{\eta}\partial_{\hat{y}},
$$
where $'$ means $\partial_{\hat{x}}$,
the system (\ref{Okt31a}) takes the form
\begin{equation}\label{K2aa}
\begin{aligned}
&F(\hat{\psi},\eta):=\Bigl(\lambda^2\Bigl(\partial_{\hat{x}}-\frac{\hat{y}\eta'}{\eta}\partial_{\hat{y}}\Bigr)^2+\Bigl(\frac{d}{\eta}\partial_{\hat{y}}\Bigr)^2\Bigr) \hat{\psi}+\omega(\hat{\psi})=0\quad\mbox{in\!}\quad Q,\\
&G(\hat{\psi},\eta):=\frac{1}{2}\Bigl(\lambda^2\Bigl|\Bigl(\partial_{\hat{x}}-\frac{\hat{y}\eta'}{\eta}\partial_{\hat{y}}\Bigr)\hat{\psi}\Bigr|^2
+\Bigl|\frac{d}{\eta}\partial_{\hat{y}}\hat{\psi}\Bigr|^2\Bigr)+\eta-R=0\quad\mbox{for\!}\quad\hat{y}=d,\\
&H(\hat{\psi}):=(\hat{\psi}-1)|_{\hat{y}=d}=0,\\[2mm]
&\hat{\psi}=0\quad\mbox{for\!}\quad\hat{y}=0,
\end{aligned}
\end{equation}
where
$$
\hat{\psi}({\hat{x}},\hat{y})=\psi\Bigl({\hat{x}},\frac{\hat{y}\eta({\hat{x}})}{d}\Bigr)\quad\mbox{or}\quad\psi(x,y)=\hat{\psi}\Bigl(x,\frac{dy}{\eta}\Bigr).
$$
Then the problem (\ref{K2aa}) is equivalent to
$$
(F(\hat{\psi},\eta),G(\hat{\psi},\eta),H(\hat{\psi}))=0,
$$
which is defined on $\Lambda_0$-periodic, even functions from $C^{2,\alpha}(Q)\times C^{2,\alpha}(\mathbb{R})$  satisfying $\hat{\psi}(\hat{x},0)=0$ and $\eta>0$.

We calculate the Fr\'{e}chet derivative at $(\hat{\psi},\eta)$:
\begin{multline}\label{Ju28a}
\mathcal{F}(u,\zeta):=\partial_{t}F(\hat{\psi}+tu,\eta+t\zeta)|_{t=0}=\Bigl(\lambda^2\Bigl(\partial_{\hat{x}}-\frac{\hat{y}\eta'}{\eta}\partial_{\hat{y}}\Bigr)^{\!2}
+\Bigl(\frac{d}{\eta}\partial_{\hat{y}}\Bigr)^{\!2}\Bigr)u+\omega'(\hat{\psi})u\\
{}-\lambda^2\Bigl(\frac{\zeta}{\eta}\Bigr)'\Bigl(\partial_{\hat{x}}-\frac{\eta'}{\eta}{\hat{y}}\partial_{\hat{y}}\Bigr)\hat{y}\partial_{\hat{y}}\hat{\psi}
-\lambda^2\Bigl(\partial_{\hat{x}}-\frac{\hat{y}\eta'}{\eta}\partial_{\hat{y}}\Bigr)\Bigl(\frac{\zeta}{\eta}\Bigr)'\hat{y}\partial_{\hat{y}}\hat{\psi}
-2\frac{d^2\zeta}{\eta^3}\partial_{\hat{y}}^2\hat{\psi},
\end{multline}
\begin{multline}\label{Ju28aa}
\mathcal{G}(u,\zeta):=\partial_tG(\hat{\psi}+tu,\eta+t\zeta)|_{t=0}=\lambda^2\Bigl(\partial_{\hat{x}}\hat{\psi}-\frac{\hat{y}\eta'}{\eta}\partial_{\hat{y}}\hat{\psi}\Bigr)\Bigl(\partial_{\hat{x}}u
-\frac{\hat{y}\eta'}{\eta}\partial_{\hat{y}}u\Bigr)\\
{}+\frac{d^2}{\eta^2}\partial_{\hat{y}}\hat{\psi}\partial_{\hat{y}}u+\zeta
-\lambda^2\Bigl(\partial_{\hat{x}}\hat{\psi}-\frac{\hat{y}\eta'}{\eta}\partial_{\hat{y}}\hat{\psi}\Bigr)\Bigl(\frac{\zeta}{\eta}\Bigr)'\hat{y}\partial_{\hat{y}}\hat{\psi}
-\frac{d^2\zeta}{\eta^3}\partial_{\hat{y}}\hat{\psi}\partial_{\hat{y}}\hat{\psi}
\end{multline}
and
\begin{equation}\label{Ju28ab}
\mathcal{H}u=u|_{\hat{y}=d}.
\end{equation}
Here $u=0$ for $\hat{y}=0$.

Let us introduce the transformation
\begin{equation}\label{Au2a}
v(x,y)=u(\hat{x},\hat{y})-\psi_{y}(x,y)\frac{y\zeta}{\eta}.
\end{equation}

\begin{lemma} $(i)$ Assume that the functions $\psi$ and $\eta$ satisfy the first equation\/ \eqref{Okt31a} in the domain $D_\eta$. If the function  $v$ is given by\/ \eqref{Au2a}, then
$$
(\lambda^2\partial_x^2+\partial_y^2)v+\omega'(\psi)v=\mathcal{F}(u,\zeta).
$$

$(ii)$ Furthermore
$$
\lambda^2\psi_xv_x+\psi_yv_y+\widehat{\rho}\zeta=\mathcal{G}(u,\zeta)\quad\mbox{\!on\!}\quad \mathcal{S}_\eta.
$$

\end{lemma}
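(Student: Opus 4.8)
The strategy I would use is to read the flattened residuals $F,G$ of \eqref{K2aa} as pullbacks, under the $\eta$-dependent change of variables $\hat y=dy/\eta$, of the genuine ("Eulerian") residuals
\[
\Phi[\psi,\eta] := (\lambda^2\partial_x^2 + \partial_y^2)\psi + \omega(\psi) \ \text{ on } D_\eta,
\qquad
\Gamma[\psi,\eta] := \tfrac{1}{2}(\lambda^2\psi_x^2 + \psi_y^2) + \eta - R \ \text{ on } S_\eta.
\]
The chain-rule relations $\partial_x=\partial_{\hat x}-(\hat y\eta'/\eta)\partial_{\hat y}$, $\partial_y=(d/\eta)\partial_{\hat y}$ recorded before \eqref{K2aa} give the pointwise identities $F(\hat\psi,\eta)(\hat x,\hat y)=\Phi[\psi,\eta](x,y)$ and $G(\hat\psi,\eta)(\hat x,d)=\Gamma[\psi,\eta](x,\eta(x))$ for every admissible pair, where $(x,y)$ and $(\hat x,\hat y)$ correspond under the flattening. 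I would then attach to the perturbation $(\hat\psi+tu,\eta+t\zeta)$ the moving-frame family $\eta_t:=\eta+t\zeta$, $\psi_t(x,y):=(\hat\psi+tu)(x,dy/\eta_t(x))$ on $D_{\eta_t}$, so that $\mathcal{F}(u,\zeta)$ and $\mathcal{G}(u,\zeta)$ are exactly the $t$-derivatives at $t=0$ of the two pullback identities.

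The crucial preliminary step is to show that the function $v$ of \eqref{Au2a} is precisely the Eulerian variation $\partial_t\psi_t|_{t=0}$. This is short: differentiating $\psi_t(x,y)=(\hat\psi+tu)(x,dy/\eta_t(x))$ in $t$ produces $u$ together with $\hat\psi_{\hat y}\,\partial_t(dy/\eta_t)|_{t=0}$, and using $\hat\psi_{\hat y}=(\eta/d)\psi_y$ and $\partial_t(dy/\eta_t)|_{t=0}=-dy\zeta/\eta^2$ collapses the second term to $-\psi_y\,y\zeta/\eta$. Hence $\partial_t\psi_t|_{t=0}=u-\psi_y\,y\zeta/\eta=v$, which also explains why the correction term appears in \eqref{Au2a}.

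With this in hand both identities follow by differentiating the pullback relations and applying the chain rule to the moving evaluation point. For part $(i)$ the index $\hat y$ is fixed while $y_t=\hat y\eta_t/d$ moves with velocity $y\zeta/\eta$, so
\[
\mathcal{F}(u,\zeta) = \partial_t\big|_{0}\,\Phi[\psi_t,\eta_t] + \big(\partial_y\Phi[\psi,\eta]\big)\,\frac{y\zeta}{\eta}.
\]
Since $\lambda^2\partial_x^2+\partial_y^2$ has $\eta$-independent coefficients and $\partial_t\psi_t|_0=v$, the intrinsic term equals $(\lambda^2\partial_x^2+\partial_y^2)v+\omega'(\psi)v$, while the transport term vanishes identically because $\Phi[\psi,\eta]\equiv0$ under the hypothesis that $\psi,\eta$ solve the first equation of \eqref{Okt31a}. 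For part $(ii)$ the surface $y=\eta_t$ moves with velocity $\zeta$, giving
\[
\mathcal{G}(u,\zeta) = \partial_t\big|_{0}\,\Gamma[\psi_t,\eta_t] + \big(\partial_y\Gamma[\psi,\eta]\big)\,\zeta
= \big(\lambda^2\psi_x v_x + \psi_y v_y + \zeta\big) + \big(\lambda^2\psi_x\psi_{xy} + \psi_y\psi_{yy}\big)\,\zeta,
\]
and recognizing $\lambda^2\psi_x\psi_{xy}+\psi_y\psi_{yy}=\widehat\rho-1$ from \eqref{eq:rhohatdef} turns the right-hand side into $\lambda^2\psi_x v_x+\psi_y v_y+\widehat\rho\zeta$, as claimed on $S_\eta$.

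The main obstacle is bookkeeping rather than depth: one must keep scrupulous track of which contributions come from the intrinsic variation of the residual and which from the motion of the evaluation point, since it is exactly the transport term that distinguishes the two parts — it is annihilated by the interior equation in $(i)$ but survives in $(ii)$ to supply the $-1$ that upgrades $\zeta$ to $\widehat\rho\zeta$. As a safeguard I would also verify both identities by brute force, substituting \eqref{Au2a} into \eqref{Ju28a} and \eqref{Ju28aa}, expanding the derivatives of $v$ by the product rule, and confirming that the terms carrying $\zeta$, $\zeta'$ and $(\zeta/\eta)'$ cancel against the explicit $\eta$-derivative terms of $\mathcal{F}$ and $\mathcal{G}$; the conceptual derivation above predicts precisely which groupings must vanish and so organizes this otherwise lengthy cancellation.
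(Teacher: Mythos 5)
Your proposal is correct, but it is organized differently from the paper's proof, and the difference is worth noting. The paper proves both parts by brute force: it substitutes \eqref{Au2a} into the left-hand sides, expands with the chain rule, splits off the term $\frac{y\zeta}{\eta}\bigl(\lambda^2\partial_x^2+\partial_y^2+\omega'(\psi)\bigr)\psi_y$ (which vanishes because differentiating the interior equation in $y$ shows $\psi_y$ solves the linearized equation), and then matches the surviving cross terms against the explicit expansions \eqref{Ju28a}, \eqref{Ju28aa} of $\mathcal{F}$ and $\mathcal{G}$. You never touch those explicit formulas: you use only the definitions $\mathcal{F}=\partial_t F|_{t=0}$, $\mathcal{G}=\partial_t G|_{t=0}$, read $F,G$ as pullbacks of the Eulerian residuals $\Phi,\Gamma$, identify $v$ as the Eulerian variation $\partial_t\psi_t|_{t=0}$ (which, as a bonus, explains where the correction term in \eqref{Au2a} comes from), and obtain both identities from the chain rule with a transport term. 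The two arguments hinge on the same fact in different clothing: your transport coefficient $\partial_y\Phi[\psi,\eta]$ is literally $\bigl(\lambda^2\partial_x^2+\partial_y^2+\omega'(\psi)\bigr)\psi_y$, the very term the paper annihilates with the hypothesis in part $(i)$; and in part $(ii)$ your transport term $\partial_y\Gamma\cdot\zeta=(\lambda^2\psi_x\psi_{xy}+\psi_y\psi_{yy})\zeta=(\widehat{\rho}-1)\zeta$ is the combination the paper produces by expansion. What your route buys is conceptual economy and an explanation of the structure (Eulerian vs.\ flattened variation, why $(i)$ needs the interior equation while $(ii)$ is an unconditional identity); what the paper's route buys is a self-contained verification directly against the stated formulas \eqref{Ju28a}, \eqref{Ju28aa}, with no need to introduce the moving-domain family $(\psi_t,\eta_t)$. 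The only points you should make explicit if you write this up are the standard technicalities your argument quietly uses: that an interior point $(x,y)\in D_\eta$ stays in $D_{\eta_t}$ for small $t$ (so the fixed-point $t$-derivatives make sense, with extension to the closure by continuity), and the interchange of $\partial_t$ with the spatial derivatives; these are the same smoothness assumptions the paper's computation relies on implicitly.
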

\begin{proof} $(i)$ Using relations (\ref{Ju28a}) and (\ref{Ju28ab}), we get
\begin{align*}
(\lambda^2\partial_x^2+\partial_y^2)v+\omega'(\psi)v={}&\Bigl(\lambda^2\Bigl(\partial_{\hat{x}}-\frac{\hat{y}\eta'}{\eta}\partial_{\hat{y}}\Bigr)^{\!2}
+\Bigl(\frac{d}{\eta}\partial_{\hat{y}}\Bigr)^{\!2} +\omega'(\hat{\psi})\Bigr)u(\hat{x},\hat{y})\\
&{}-\Bigl(\lambda^2\partial_x^2+\partial_y^2+\omega'(\psi)\Bigr)\Bigl(\psi_{y}(x,y)\frac{y\zeta}{\eta}\Bigr)\\
{}={}&\Bigl(\lambda^2\Bigl(\partial_{\hat{x}}-\frac{\hat{y}\eta'}{\eta}\partial_{\hat{y}}\Bigr)^{\!2}+\Bigl(\frac{d}{\eta}\partial_{\hat{y}}\Bigr)^{\!2} +\omega'(\hat{\psi})\Bigr)u(\hat{x},\hat{y})\\
&{}-\frac{y\zeta}{\eta}\Bigl(\lambda^2\partial_x^2+\partial_y^2+\omega'(\psi)\Bigr)\psi_{y}(x,y)-I,
\end{align*}
where
\begin{equation*}
I=\lambda^2\Bigl(2(y\partial_y\psi_x\Bigl(\frac{\zeta}{\eta}\Bigr)'+y\psi_y\Bigl(\frac{\zeta}{\eta}\Bigr)''\Bigr)+2\psi_{yy}\frac{\zeta}{\eta}.
\end{equation*}
 Comparing this with the second line in (\ref{Ju28a}), we arrive at the assertion (i).

$(ii)$ We have
\begin{align*}
\lambda^2\psi_xv_x+\psi_yv_y+\widehat{\rho}\zeta={}&\lambda^2\psi_xu_x+\psi_yu_y+\widehat{\rho}\zeta\\
&{}-(\lambda^2\psi_x\psi_{xy}+\psi_y\psi_{yy})\zeta-\lambda^2\psi_x\psi_y y\Bigl(\frac{\zeta}{\eta}\Bigr)'-\psi_y^2\frac{\zeta}{\eta}\\
{}={}&\lambda^2\psi_xu_x+\psi_yu_y+\zeta-\lambda^2\psi_x\psi_y y\Bigl(\frac{\zeta}{\eta}\Bigr)'-\psi_y^2\frac{\zeta}{\eta}.
\end{align*}
Together with (\ref{Ju28aa}), this leads to the required proof of $(ii)$.
\end{proof}

\begin{corollary} Let the functions $\psi$ and $\xi$ satisfy the first equation\/ \eqref{K2aa} in the domain~$Q$.
Assume that\/ $u$ and\/ $\zeta$ satisfy
\begin{equation*} 
\begin{aligned}
&\mathcal{F}(u,\zeta)=0\quad\mbox{\!in\!}\quad Q,\\
&\mathcal{G}(u,\zeta)=\mu b\zeta\quad\mbox{\!for\!}\quad y=d,\\
&u=0\quad\mbox{\!for\!}\quad y=d.
\end{aligned}
\end{equation*}
Then the functions\/ $v$ and\/ $\zeta$ satisfy
\begin{equation*} 
\begin{aligned}
& (\lambda^2\partial_x^2+\partial_y^2+\omega'(\psi))v=0\quad\mbox{\!in\!}\quad D_\eta,\\
&(\lambda^2\psi_x\partial_x+\psi_y\partial_y) v-\frac{\widehat{\rho}}{\psi_y} v=\mu b v\quad\mbox{\!on\!}\quad S_\eta,\\
&v=0\quad\mbox{\!for\!}\quad y=0,
\end{aligned}
\end{equation*}
and
$$
\zeta=-v/\psi_y \quad\mbox{\!on\!}\quad S_\eta.
$$
\end{corollary}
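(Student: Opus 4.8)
The plan is to obtain all four assertions directly from the preceding Lemma, using the boundary behaviour of $u$; the substitution \eqref{Au2a}, $v=u-\psi_y\,y\zeta/\eta$, is the only link required between the two problems. I would check the interior equation first, then the bottom condition, then the boundary relation $\zeta=-v/\psi_y$, and finally the surface eigenvalue condition, in that order, since the last step relies on the relation $\zeta=-v/\psi_y$ being already in hand.

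The interior equation is immediate from part $(i)$ of the Lemma: its hypothesis (that $\psi,\eta$ solve the first equation of \eqref{Okt31a}, equivalently the first equation of \eqref{K2aa}) is exactly what the Corollary assumes, so $(\lambda^2\partial_x^2+\partial_y^2+\omega'(\psi))v=\mathcal{F}(u,\zeta)=0$ in $D_\eta$. For the bottom condition I would evaluate \eqref{Au2a} at $y=0$: the flattening $\hat y=dy/\eta$ sends $y=0$ to $\hat y=0$, where $u$ vanishes (a standing property of the function space in which the Fr\'echet derivative is taken), while the factor $y\zeta/\eta$ vanishes as well, giving $v(x,0)=0$.

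The surface is the substantive step. On $S_\eta$ one has $y=\eta(x)$, hence $\hat y=d$ and $y/\eta=1$; inserting $u|_{\hat y=d}=0$ into \eqref{Au2a} collapses it to $v=-\psi_y\zeta$, i.e. $\zeta=-v/\psi_y$ on $S_\eta$, which is the final asserted identity and is meaningful precisely because subcriticality rules out surface stagnation, so $\psi_y\neq0$ there. I would then take part $(ii)$ of the Lemma, $\lambda^2\psi_xv_x+\psi_yv_y+\widehat{\rho}\,\zeta=\mathcal{G}(u,\zeta)$ on $S_\eta$, substitute the eigenvalue hypothesis $\mathcal{G}(u,\zeta)=\mu b\zeta$, and then replace $\zeta$ by $-v/\psi_y$ in the $\widehat{\rho}\,\zeta$ term and on the right-hand side; collecting terms turns the identity into the asserted Robin-type condition $(\lambda^2\psi_x\partial_x+\psi_y\partial_y)v-\widehat{\rho}\,v/\psi_y=\mu b\,v$, with the weight on the right produced by that substitution.

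There is no deep obstacle, as the Lemma carries the analytic content; the steps that demand care are entirely bookkeeping. I would be most careful about three points: the identification of the fixed boundary $\hat y=d$ with the moving surface $S_\eta$ under the flattening map; the cancellation $y/\eta=1$ on $S_\eta$, which is what makes $u=0$ translate into the clean relation $\zeta=-v/\psi_y$ rather than a weighted one; and the sign-and-weight tracking when $\zeta=-v/\psi_y$ is substituted into the $\mathcal{G}$-relation. I would cross-check this last substitution against the spectral problem \eqref{Dec27a} to confirm that the eigenvalue $\mu$ and the weight are transported to the $v$-problem as claimed.
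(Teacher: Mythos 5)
Your overall route is the intended one (the paper states this Corollary without a separate proof, as an immediate consequence of the Lemma plus the substitution \eqref{Au2a}), and your first three steps are sound: Lemma $(i)$ together with $\mathcal{F}(u,\zeta)=0$ gives the interior equation; $u|_{\hat y=0}=0$ and the vanishing of the factor $y\zeta/\eta$ at $y=0$ give $v(x,0)=0$; and on $S_\eta$, where $\hat y=d$ and $y/\eta=1$, the hypothesis $u|_{\hat y=d}=0$ collapses \eqref{Au2a} to $v=-\psi_y\zeta$, i.e.\ $\zeta=-v/\psi_y$, which is meaningful because $\psi_y\neq0$ on the free surface.

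The gap is in your final step --- exactly the substitution you said you would cross-check against \eqref{Dec27a} but never carried out. Lemma $(ii)$ combined with the hypothesis $\mathcal{G}(u,\zeta)=\mu b\zeta$ reads
\[
\lambda^2\psi_xv_x+\psi_yv_y+\widehat{\rho}\,\zeta=\mu b\,\zeta\quad\mbox{on}\quad S_\eta,
\]
and replacing $\zeta$ by $-v/\psi_y$ on \emph{both} sides gives
\[
(\lambda^2\psi_x\partial_x+\psi_y\partial_y)v-\frac{\widehat{\rho}}{\psi_y}\,v=-\frac{\mu b}{\psi_y}\,v,
\]
not $\mu b\,v$: the right-hand side acquires the factor $-1/\psi_y$, and since $b$ denotes one and the same function throughout your identity, ``collecting terms'' cannot remove it. What actually follows from the hypotheses is the eigenvalue condition with the \emph{transported} weight $-b/\psi_y$; the formula you assert would require $\psi_y\equiv-1$ on $S_\eta$, which fails in general (near the laminar flow $\psi_y\approx U'(d)$). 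Had you performed the advertised cross-check you would have seen the mismatch: to land on \eqref{Dec27a}, whose right-hand side is $\mu v/\psi_y$, the flattened hypothesis must carry weight $b=-1$, i.e.\ $\mathcal{G}(u,\zeta)=-\mu\zeta$, rather than the same $b$ appearing in the conclusion. In other words, the sign and the weight genuinely change under the map $\zeta\mapsto -v/\psi_y$; a correct write-up must either state the conclusion with $-b/\psi_y$ in place of $b$ or adjust the hypothesis accordingly. (This bookkeeping is also glossed over in the printed statement, where the same letter $b$ silently stands for two different weights --- but that is precisely why the step cannot be waved through: tracking the weight is the entire nontrivial content of the Corollary.)
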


If we choose here $b=1/\psi_y$, then we arrive at the spectral problem (\ref{Au5a}), (\ref{Dec27a}).

\section{Constant vorticity}

In this section we assume that $\omega=a$, where $a$ is a constant.

\subsection{Uniform stream solution, dispersion equation}\label{SAu13a}

 The uniform stream solution $U$ and the depth $d$ satisfy
\begin{gather*}
U''(y)+a=0\quad\mbox{on\!}\quad (0,d),
\\
U(0)=0,\quad U(d)=1,
\end{gather*}
and
$$
\frac{1}{2}U'(d)^2+d=R.
$$
Solving these equations, we get
\begin{equation}\label{eq:defU}
U(y)=-\frac{a}{2}y(y-d)+\frac{y}{d},
\end{equation}
where $d$ satisfies 
\begin{equation}\label{Ja4a}
\mathcal{R}(d)=R,\quad \mathcal{R}(d):=\frac{1}{2}\Bigl(\frac{1}{d^2}-a+\frac{a^2d^2}{4}\Bigr)+d.
\end{equation}
The function $\mathcal{R}$ is convex and its minimum can be found from
\begin{equation}\label{Ju8b}
\mathcal{R}'(d)=-\frac{1}{d^3}+\frac{a^2d}{4}+1=-\frac{1}{d^3}\Bigl(1-\frac{a^2d^4}{4}\Bigr)+1=0.
\end{equation}
We denote by $\dcrit$ the value of $d$ where the minimum is attained. Writing the equation for $\dcrit$ in the form
\[
\dcrit^3=\frac{4}{a^2\dcrit+4},
\]
it is easy to see that $\dcrit^3<1$ if $a\neq 0$, and $\dcrit=1$ if $a=0$ (see Fig.~\ref{fig:mu2_pos_cf}). Besides, one can verify that $\dcrit(a)<\ds(a)=\sqrt{2/|a|}$. Indeed, the last displayed formula can be written as $a^2 \dcrit^4=4(1-\dcrit^3)$ which yields the inequality $|a| \dcrit^2=2\sqrt{1-\dcrit^3}<2$. Thus, we find $\dcrit^2<2/|a|=\ds^2$.

To introduce the corresponding dispersion equation, we consider the solution $\gamm=\gamm(y;\tau)$ of the following Dirichlet problem
\begin{equation*}
\gamm''-\tau^2\gamm=0\quad\mbox{on\!}\quad (0,d),
\end{equation*}
and
\begin{equation*}
\gamm(0;\tau)=0,\quad\gamm(d;\tau)=1.
\end{equation*}
Solving this problem, we get
$$
\gamm(y;\tau)=\frac{\sinh(\tau y)}{\sinh(\tau d)}.
$$
To define the dispersion equation, we depart from solving the boundary value problem
\begin{equation}\label{Fe5a}
\begin{aligned}
&(\partial_x^2+\partial_y^2)u=0\quad\mbox{in\!}\quad\mathbb{R}\times (0,d),\\
&\dUd\partial_yu-\frac{\widehat{\rho}_0}{\dUd}u=0\quad\mbox{for\!}\quad y=d,\\
&u(x,0)=0.
\end{aligned}
\end{equation}
Here the left-hand side is the Fr\'{e}chet derivative at the laminar solution $(U,d)$, $\lambda=1$ and
$$
\dUd=U'(d)=\frac{1}{d}-\frac{ad}{2},\quad\widehat{\rho}_0=1-\mddU\dUd.
$$
Seeking even, periodic solutions in the form
$$
u(x,y)=\gamm(y;\tau)\cos \tau x,
$$
we arrive at  the dispersion equation
\begin{equation}\label{Ju8ba}
\DispOld(\tau):=\dUd\gamm'(y;\tau)-\frac{\widehat{\rho}_0}{\dUd}=0,
\end{equation}
which guarantees that $u$ solves (\ref{Fe5a}). Further it will be convenient to use a slightly modified variant of the equation \eqref{Ju8ba}
\begin{equation}\label{eq:dispdef}
\Disp(\tau):=\dUd^2\gamm'(y;\tau)+\mddU\dUd-1=0.
\end{equation}

We note that $\dUd=0$ for $a>0$ and $d=\ds(a)$. In the other case, when $\dUd\neq0$, we have
$$
\Disp'(\tau)=\frac{\dUd^2(\cosh(d\tau)\sinh(d\tau)-d\tau)}{\sinh(d\tau)^2}>0,
$$
i.e.\ the function $\Disp(\tau)$ is strictly increasing. At the same time, for $d>\dcrit$
$$
\Disp(0)=-\mathcal{R}'(d)<-\mathcal{R}'(\dcrit)=0.
$$
Thus, for $\dUd\neq0$ equation (\ref{eq:dispdef}) has a unique positive solution, which we denote by
$\taust$ and put
\begin{equation*} 
\Lambda_0=\Lambda_*=\frac{2\pi}{\taust}.
\end{equation*}

The case, when $U'(y)$ changes  sign, is of particular interest for us. Assume that 
\begin{equation}
U'(y_*)=0\quad\mbox{for some $y_*\in(0,d)$}.
\label{eq:U'=0}
\end{equation}
Since $U'' = -a$, the case $a < 0$ yields strict convexity of $U(y)$, making $y_*$ the unique minimum on $[0,d]$ and implying $U(y_*) < U(0) = 0$. Conversely, for $a > 0$, strict concavity ensures that, provided $y_* \in (0,d)$, the profile attains the unique maximum at $y_*$, which yields $U(y_*) > U(d) = 1$.

\begin{figure}[t!]
\centering
\centering\vspace{1.25mm}
 \SetLabels
 \L (-0.02*0.9) $d$\\
 \L (0.97*-0.02) $a$\\
 \L (0.48*0.29) $\dcrit(a)$\\
 \L (0.38*0.7) $\ds(a)$\\
 \L (0.56*0.7) $\ds(a)$\\
 \L (0.22*0.5) $\Upsilon_-$\\
 \L (0.77*0.5) $\Upsilon_+$\\
 \L (0.5037*0.406) $\Theta$\\
 \endSetLabels
 \leavevmode\AffixLabels{\includegraphics[width=80mm]{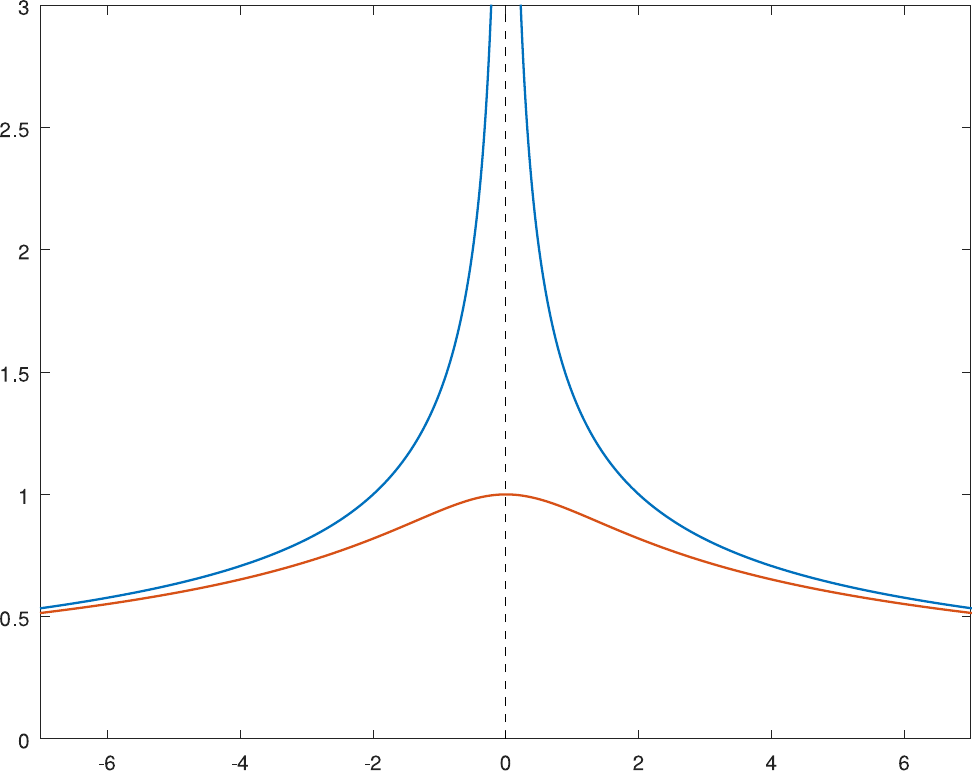}}\vspace{1mm}
 \caption{Stagnation points of the laminar flow $U$: stagnation points are absent in the domain $\Theta$ and they are present in both $\Upsilon_-$ and $\Upsilon_+$.}
\label{fig:Udiff0}
\end{figure}

From \eqref{eq:U'=0} 
\begin{equation*}
y_*=\frac{a d^2+2}{2ad}\quad\mbox{or}\quad
Y_*=Y_*(\varsigma):=\frac{y_*}{d}=\frac{\varsigma+2}{2\varsigma},\quad\mbox{where}\quad \varsigma=ad^2.
\end{equation*}
In the case $a>0$, $Y_*\in(1/2,1]$ for $\varsigma\in[2,+\infty)$. The function $Y_*(\varsigma)$ decreases monotonically from the value $1$ at $\varsigma=2$ to $1/2$, which is the limit of $Y_*$ as $\varsigma\to+\infty$.
In the case $a<0$, $Y_*\in[0,1/2)$ for $\varsigma\in(-\infty,-2]$, where $Y_*(\varsigma)\to 1/2$ as $\varsigma\to-\infty$ and $Y_*(\varsigma)$ decreases monotonically to zero at $\varsigma=-2$.

For $a>0$, the equality $\varsigma=2$ leads to $d=\ds(a)$  and, therefore, the equality $\Disp(\tau)=-1$ ($\dUd=0$) implies that the extremum point \eqref{eq:U'=0} is located on the free surface. If $\varsigma=-2$, which implies that $a<0$, the extremum point is located on the bottom and $d=\ds(a)$.

These observations are illustrated in Fig.~\ref{fig:Udiff0}. There, $\Theta=\{(a,d): \dcrit(a)<d<\ds(a)\}$ is the domain, where $y_*$  does not belong to $[0,d]$. In $\Upsilon_+=\{(a,d): {a>0},\,d>\ds(a)\}$, there is  $y_*\in(d/2,d)$, and
in $\Upsilon_-=\{(a,d):a<0,\,d>\ds(a)\}$, there is $y_*\in(0,d/2)$.

\begin{remark}
Let us give some details on computation of the function $\dcrit(a)$ shown in Figs.~\ref{fig:mu2_pos_cf} and \ref{fig:Udiff0}, which is defined as the real positive root of the equation \eqref{Ju8b}. Dividing \eqref{Ju8b} by $\dcrit$, we arrive at the equation
\begin{equation}
s^4-s-p=0,\quad\mbox{where}\ \ s=\frac{1}{\dcrit},\ \ p=\frac14a^2.\label{eq:seq}
\end{equation}
A solution of \eqref{eq:seq} is
\begin{equation}
s=\frac12\Delta+\frac12\sqrt{\frac{2}{\Delta}-\Delta^2},\quad \mbox{where}\ \ \Delta=\sqrt{2q},
\label{eq:sqdef}
\end{equation}
and $q$ is the real solution of the resolvent cubic equation $E(p,q):=8q^3+8p q-1=0$, so
\begin{equation}
q=r-\frac{p}{3r},\quad r=\left(\frac{\sqrt{27+256p^3}}{16\cdot3^{3/2}}+\frac{1}{16}\right)^{\!1/3}.
\label{eq:qr}
\end{equation}
It is not difficult to see that $r^2\geq p/3$ (dropping the terms $27$ and $1/16$), which, in turn, guarantees that $q\geq 0$. Besides, $q=1/2$ for $p=0$, and $q'(p)=-\pd{p}E(p,q)/\pd{q}E(p,q)=-q/(3q^2+p)<0$, which means that $q\in[0,1/2]$. Then, the expression under square root in \eqref{eq:sqdef} is positive and $s$ is real and positive. Finally, combining \eqref{eq:qr} with \eqref{eq:seq} and \eqref{eq:sqdef}, we arrive at a rather simple representation of $\dcrit(a)$. It also help us obtain the following asymptotics, which will be useful in the subsequent analysis:
\begin{equation}
\dcrit(a) = \frac{2^{1/2}}{|a|^{1/2}} - \frac{1}{a^2} + \frac{3}{2^{3/2}|a|^{7/2}}-\frac{1}{|a|^5}
+\Obig\bigl(a^{-13/2}\bigr)\quad\mbox{as\!}\quad a\to\infty.
\label{eq:dcritasympt}
\end{equation}
\end{remark}


\subsection{Branch of Stokes waves. The first eigenvalue of the Fr\'{e}chet derivative}\label{SecJan16b}

Let a subcritical laminar flow $(U(y),d)$  be fixed, $d>\dcrit$, $\dUd\neq 0$ and  $\taust$ be the root of the dispersion equation (\ref{eq:dispdef}). Then there exists a branch of even, $\Lambda_*$-periodic Stokes waves
\begin{equation}\label{Dec28ba}
(\psi(x,y;t),\eta(x;t),\lambda(t))\;\in C^{2,\alpha}(D_\eta)\times C^{2,\alpha}(\mathbb{R})\times (0,\infty),\quad t\geq 0.
\end{equation}
 The following properties (see \cite{Var23} and \cite{Koz2}) are valid for the branch:

(i) The vector function
$$
[0,\infty) \ni t\rightarrow \Bigl(\psi\Bigl(\hat{x},\frac{\eta(\hat{x};t)\hat{y}}{d};t\Bigr),\eta(\hat{x};t),\lambda(t)\Bigr)\in C^{2,\alpha}(Q)\times C^{2,\alpha}(\mathbb{R})\times (0,\infty)
$$
is analytic with respect to $t$ (possibly after local reparametrization). Here the variables $\hat{x}$ and $\hat{y}$ were introduced in Sect.~\ref{SecJan16a}.

(ii) The vector function (\ref{Dec28ba}) is analytic in a neighborhood of every point $(x,y,t)\in \overline{D_\eta}\times [0,\infty)$
and
\begin{align*}
&\psi(x,y;t)=U(y)-\dUd \gamm(y;\taust)\cos(\taust x)+\Obig(t^2),\\
&\eta(x;t)=d+t\cos(\taust x)+\Obig(t^2),\\
&\lambda(t)=1+\lambda_2t^2+\Obig(t^3)
\end{align*}
for small $t$.

Denote by $\mu_1(t)$ the first eigenvalue of the spectral problem (\ref{Au5a}), (\ref{Dec27a}), where $(\psi,\eta,\lambda)=(\psi(t),\eta(t),\lambda(t))$, $t\geq 0$.

\begin{proposition}
The first eigenvalue $\mu_1(t)$ of the Fr\'{e}chet derivative is negative for all\/ $t\geq 0$.
\end{proposition}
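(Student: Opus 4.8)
The plan is to recast the spectral problem \eqref{Au5a}--\eqref{Dec27a} as a weighted Steklov eigenvalue problem and to exploit the fact that its ground state cannot change sign. Since $\omega=a$ is constant, $\omega'(\psi)=0$ and the interior operator is simply $\lambda^2\partial_x^2+\partial_y^2$. Multiplying the interior equation by a test function $w$ vanishing on $y=0$, integrating over $D_\eta$, and using that $(\psi_x,\psi_y)$ is parallel to the outward normal on $S_\eta$ (which follows by differentiating $\psi(x,\eta(x))=1$ to get $\psi_x=-\psi_y\eta'$ there), one rewrites the surface condition \eqref{Dec27a} as the Rayleigh quotient
\[
\mu_1(t)=\min_{v}\frac{\displaystyle\int_{D_\eta}\bigl(\lambda^2v_x^2+v_y^2\bigr)\,\D x\,\D y-\int_{S_\eta}\frac{\widehat{\rho}}{\psi_y^2}\,v^2\,\D x}{\displaystyle\int_{S_\eta}\frac{1}{\psi_y^2}\,v^2\,\D x},
\]
the minimum being over $\Lambda_*$-periodic $v$ with $v|_{y=0}=0$. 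Here $\psi_y\neq0$ on $S_\eta$ along the subcritical branch for small $t$ (by continuity from $\dUd\neq0$), so the weights are well defined, and the minimum is attained because, as already noted, the spectrum is bounded below and discrete.

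Next I would show the minimiser is one-signed and even. Replacing $v$ by $|v|$ leaves both numerator and denominator unchanged, so $|v|$ is again a minimiser; being a nonnegative solution of $(\lambda^2\partial_x^2+\partial_y^2)|v|=0$, it is strictly positive in $\overline{D_\eta}\setminus\{y=0\}$ by the strong maximum principle together with the Hopf lemma at $S_\eta$. Two such strictly positive eigenfunctions cannot be orthogonal in the boundary inner product, so the first eigenvalue is simple; invoking the reflection symmetry $x\mapsto-x$, the positive ground state is then even.

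The decisive step for $t>0$ is to produce a zero eigenvalue whose eigenfunction changes sign. By translation invariance of \eqref{Okt31a} --- equivalently, by applying the Corollary to the pair $(u,\zeta)=(-\hat{\psi}_{\hat{x}},-\eta')$ obtained by differentiating the Stokes wave in the horizontal shift, which satisfies $\mathcal{F}(u,\zeta)=0$, $\mathcal{G}(u,\zeta)=0$ and $u|_{\hat{y}=d}=0$ --- the function $v=-\psi_x$ solves \eqref{Au5a}--\eqref{Dec27a} with $\mu=0$. Its trace on $S_\eta$ equals $\psi_y\eta'$, which changes sign because $\eta$ is even, periodic and nonconstant for $t>0$ while $\psi_y\neq0$. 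Thus $0$ is an eigenvalue, so $\mu_1(t)\le0$, but its eigenfunction is not one-signed and hence cannot be the ground state; therefore $\mu_1(t)<0$.

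Finally, for $t=0$ the mode $\psi_x$ vanishes, so I would treat this case by separation of variables on the strip $0<y<d$: the admissible modes are $v=y$ (for $k=0$) and $v=\gamm(y;k\taust)\cos(k\taust x)$ (for $k\ge1$), with eigenvalues $\Disp(0)$ and $\Disp(k\taust)$ respectively. Since $\Disp$ is strictly increasing with $\Disp(\taust)=0$ and $\Disp(0)=-\mathcal{R}'(d)<0$ for $d>\dcrit$, the smallest eigenvalue is $\mu_1(0)=\Disp(0)<0$. The hard part will be the functional-analytic justification of the one-signed, simple ground state for the weighted Steklov form --- specifically the trace compactness ensuring attainment of the minimum and the applicability of the Hopf lemma despite the indefinite boundary weight $\widehat{\rho}/\psi_y^2$; once these are in place the remaining steps are routine.
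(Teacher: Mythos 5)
Your argument is correct in substance, but it takes a genuinely different route from the paper's. Both proofs pivot on the same zero mode coming from translation invariance: $\psi_x$ solves \eqref{Au5a}, \eqref{Dec27a} with $\mu=0$. The paper then works on the half-period domain $D_0=\{0<x<\Lambda_*/2\}$: it cites \cite{Koz2} for the nontrivial nodal fact that $\psi_x>0$ inside $D_0$ and vanishes on the two vertical sides, so $0$ is the \emph{lowest} eigenvalue of the problem with Dirichlet conditions on those sides, whereas the first eigenfunction of the Fr\'{e}chet derivative is even and hence satisfies Neumann conditions there; strict monotonicity of the lowest eigenvalue when the side conditions are relaxed from Dirichlet to Neumann then yields $\mu_1(t)<0$. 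You instead pose the problem on the full period with no evenness constraint, build the weighted Steklov Rayleigh quotient, prove the ground state is one-signed, simple, and therefore even (so it coincides with the first eigenvalue of the even problem, which is the one in the statement), and conclude because the trace $\psi_y\eta'$ of the zero mode changes sign, so $0$ cannot be the lowest eigenvalue. The trade-off: your proof needs only the trivial sign change of $\eta'$ rather than the deep positivity result of \cite{Koz2}, at the cost of carrying out the Steklov ground-state theory (attainment, regularity of $|v|$, Hopf lemma, simplicity) explicitly, machinery that the paper effectively outsources to \cite{Koz2} and to the implicit variational comparison on $D_0$.

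One caveat on scope. The proposition asserts negativity for all $t\ge0$, while your justification of the two structural inputs is local: you verify $\psi_y\neq0$ on $S_\eta$ only ``for small $t$ by continuity'', and the nonconstancy of $\eta(\cdot;t)$ for every $t>0$ is asserted, being immediate only from the small-$t$ asymptotics $\eta=d+t\cos(\taust x)+\Obig(t^2)$. For the full range of $t$ these are global properties of the branch (absence of stagnation points on the free surface and non-return to laminar flows), which is exactly the information the paper imports from \cite{Koz2}. Since the spectral problem \eqref{Au5a}, \eqref{Dec27a} is only well defined when there are no stagnation points on $S_\eta$, this does not break your mechanism, but for large $t$ you should state explicitly that you are borrowing these branch properties; as the paper itself notes, its main results anyway require only small $t$.
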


\begin{proof}
The function $v(x,y;t)=\psi_x(x,y;t)$ is odd with respect to $x$ and satisfies the spectral problem \eqref{Au5a}, \eqref{Dec27a} for every $t\geq 0$, where $\mu=0$.

Let
$$
D_0=\{(x,y)\in D_\eta\,:\, 0<x<\Lambda_*/2\},\quad S_0=\{(x,y)\in S_\eta\,:\, 0<x<\Lambda_*/2\}.
$$
Then the part of the boundary $\partial D_0\cap \{x=0\}$ is an interval $[0,y_0]$ and the part of the boundary
$\partial D_0\cap \{x=\Lambda_*/2\}$ is an interval $[0,y_1]$.
In \cite{Koz2} it is proved that
$v(x,y;t)>0$ for $x\in (0,\Lambda_*/2)$ and $y\in (0,\eta(x;t))$,
\begin{equation*}
v(0,y;t)=0\quad\mbox{on}\quad (0,y_0)\quad \mbox{and}\quad v(\Lambda_*/2,y;t)=0\quad\mbox{on}\quad (0,y_1).
\end{equation*}
Thus $v$ possesses nodal lines at the vertical boundaries of $D_0$.

The first eigenfunction $u$ satisfies also the eigenvalue problem \eqref{Au5a}, \eqref{Dec27a} with an eigenvalue $\mu_1(t)$. 
By the standard theory of self-adjoint elliptic operators, the principal eigenfunction $u$ can be chosen sign-definite 
(i.e., it has no nodes in the fundamental domain $D_0$). Due to the symmetry of the problem, $u$ is even and hence satisfies
\begin{equation*}
u_x(0,y)=0\quad\mbox{on}\quad (0,y_0)\quad \mbox{and}\quad u_x(\Lambda_*/2,y)=0\quad\mbox{on}\quad (0,y_1).
\end{equation*}

Now we distinguish two cases:

Case $t > 0$: The function $v = \psi_x$ is non-trivial and has nodal lines, while the principal eigenfunction $u$ has no nodes. 
By the variational principle (or Courant nodal domain theorem), the principal eigenvalue $\mu_1(t)$ is strictly less than any 
eigenvalue associated with a nodal eigenfunction. Since $v$ corresponds to $\mu = 0$, we conclude $\mu_1(t) < 0$.

Case $t = 0$: At the laminar flow, $\psi_x \equiv 0$, so the above argument does not apply directly. 
However, direct computation gives $\mu_1(0) = \sigma(0) < 0$ (see Sect.~\ref{SectJ22a}).

Thus, $\mu_1(t) < 0$ for all $t \ge 0$.
\end{proof}

\subsection{Exchange of stability}\label{SectJ22a}

Consider the problem \eqref{Okt31a} for even, $\Lambda_*$-periodic functions.
The function $\psi=U$ satisfies this problem for all $\lambda>0$ if $R$ is defined by (\ref{Ja4a}). Let us calculate the Fr\'{e}chet derivative.  According to (\ref{Au5a}), (\ref{Dec27a}) the corresponding spectral problem is defined by
\begin{equation}
\label{Ja22a}
\begin{aligned}
&\lambda^2\partial_x^2u+\partial_y^2u=0\quad\mbox{for\!}\quad x\in\mathbb{R},\ \ y\in (0,d),\\
&\dUd^2 u'-(1-\mddU\dUd)u=\mu u\quad\mbox{for\!}\quad y=d,\\
&u(x,0)=0.
\end{aligned}
\end{equation}
Here $u$ is an even $\Lambda_*$-periodic function. This problem can be solved by using separation of variables method:
$$
u(x,y)=\sum_{k=0}^\infty h_k(y)\cos(k\taust x).
$$
Then we obtain the following boundary value problems for the coefficients $h_k$
\begin{align*}
&h_k''(y)-\lambda^2k^2\taust^2h_k=0\quad\mbox{on\!}\quad (0,d),\\
&\dUd^2 h_k'-(1-\mddU\dUd)h_k=\mu_k h_k\quad\mbox{for\!}\quad y=d,\\
& h_k(0)=0.
\end{align*}
Solution to this problem is given by
$$
\gamm_k(y)=c_k\gamm(y;\lambda k\taust))\quad\mbox{and}\quad\mu_{1+k}(\lambda)=\Disp(\lambda k\taust),
$$
which imply
$$
\mu_1=\Disp(0),\quad\mu_2(\lambda)=\Disp(\lambda^2\taust)\quad\mbox{and}\quad \mu_k(\lambda)\geq \Disp(4\lambda^2\taust^2)\;\;\mbox{for $k>2$}.
$$
Therefore the first eigenvalue is always negative and the second eigenvalue $\mu_1(\lambda)$ is zero for $\lambda=1$, it is positive for $\lambda>1$ and it is negative for $\lambda<1$.

If we consider these eigenvalues as functions of the period $\Lambda=2\pi/\tau$ then $\mu_1(\Lambda)$ is always negative, $\mu_2(\Lambda)$ is zero for $\Lambda=\Lambda_*$, it is positive for $\Lambda<\Lambda_*$ and it is negative for $\Lambda>\Lambda_*$. So if $\Lambda$ increases from $0$ to $\infty$ then at the point $\Lambda_*$ appears additional negative eigenvalue, which gives additional instability to the problem. This leads to appearance of a new bifurcation branch  at the point $\Lambda_*$ and our aim is to verify the principle of exchange of stability with respect to small eigenvalues near this point.

\subsection{Asymptotic analysis}\label{SJa22b}

Let $(U,d)$ be a subcritical laminar flow with $R=\mathcal{R}(d)$. This means that $d>\dcrit$ and $U$ is given by (\ref{eq:defU}). Since $d>\dcrit$, the dispersion equation (\ref{eq:dispdef}) is uniquely solvable, and $\taust$ is  its positive root. We also keep the notations $\Lambda_0=\Lambda_*$ and $\gamm(y;\tau)$ introduced in Sect.~\ref{SAu13a}.

We write the problem \eqref{Okt31a} as follows:
\begin{align}
&\bigl(\lambda^2 \pd[2]{x}+\pd[2]{y}\bigr)\psi(x,y)+a=0\quad\mbox{in\!}\quad D_\eta,\label{eq:1}\\
&\psi(x,0)=0\quad\mbox{for\!}\quad x\in\mathbb{R},\label{eq:2}\\
&\psi(x,\eta(x))-1=0\quad\mbox{for\!}\quad x\in\mathbb{R},\label{eq:3-0}\\
&\frac12\left((\pd{y}\psi)^2+\lambda^2(\pd{x}\psi)^2\right)+\eta(x)-R=0\quad\mbox{on\!}\quad S_\eta.\label{eq:4-0}
\end{align}
We are looking for the solution $\eta$, $\psi$ and $\lambda$ of \eqref{eq:1}--\eqref{eq:4-0} in the form
\begin{equation}\label{eq:etaexpdef}
\eta(x):=d+\zeta(x),\quad\mbox{where}\ \
\zeta(x):=t \eta_0(x) + t^2 \eta_1(x) + t^3 \eta_2(x)+\Obig(t^4),
\end{equation}
\begin{equation}
\psi(x,y):=U(y)+t \psi_0(x,y)+t^2 \psi_1(x,y)+t^3 \psi_2(x,y)+\Obig(t^4)
\label{eq:psiexpdef0}
\end{equation}
and
\begin{equation*}
\lambda=\lambda(t)=1+\lambda_2t^2+\Obig(t^4).
\end{equation*}

As it is assumed that $\zeta(x)$ is small, we write \eqref{eq:3-0} as Taylor series:
\begin{equation}
1 = \psi(x,d) + \zeta(x)\pd{y}\psi(x,y)\bigr|_{y=d} + \tfrac12\zeta(x)^2 \pd[2]{y}\psi(x,y)\bigr|_{y=d}
+ \tfrac16\zeta(x)^3 \pd[3]{y}\psi(x,y)+\dots
\label{eq:3}
\end{equation}

Analogously,  \eqref{eq:4-0} can be written as
\begin{align}
\pd{x}\psi(x,y)\bigr|_{y=\eta(x)}={}&
\pd{x}\psi(x,d) + \zeta(x)\pd{y}\pd{x}\psi(x,y)\bigr|_{y=d} \notag\\ &{}+
\tfrac12\zeta(x)^2\pd[2]{y}\pd{x}\psi(x,y)\bigr|_{y=d} +
\tfrac16\zeta(x)^3\pd[3]{y}\pd{x}\psi(x,y)\bigr|_{y=d}+\dots,
\label{eq:psix}\\[2mm]
\pd{y}\psi(x,y)\bigr|_{y=\eta(x)}={}&
\pd{y}\psi(x,y)\bigr|_{y=d} + \zeta(x)\pd[2]{y}\psi(x,y)\bigr|_{y=d} \notag\\ &{}+
\tfrac12\zeta(x)^2\pd[3]{y}\psi(x,y)\bigr|_{y=d} +
\tfrac16\zeta(x)^3\pd[4]{y}\psi(x,y)\bigr|_{y=d}+\dots.
\label{eq:psiy}
\end{align}
Now, collect terms of the same order ($\Obig(t^0)$, $\Obig(t^1)$, $\Obig(t^2)$, and $\Obig(t^3)$) in equations \eqref{eq:1}, \eqref{eq:2}, \eqref{eq:3}, and \eqref{eq:4-0} after the substitution of \eqref{eq:psix} and \eqref{eq:psiy}. It is straightforward to verify that the term of order $\Obig(t^0)$ appears only in \eqref{eq:4-0} and coincides with \eqref{Ja4a}.

By taking the terms of order $\Obig(t^1)$ in \eqref{eq:1}, \eqref{eq:2}, \eqref{eq:3}, and \eqref{eq:4-0}, we obtain
\begin{align}
&\bigl(\pd[2]{x}+\pd[2]{y}\bigr)\psi_{0}(x,y)=0\quad\mbox{for\!}\quad y\in (0,d),\label{eq:10}\\
&\psi_0(x,0)=0,\label{eq:20}\\
&\psi_0(x,d)+\dUd\eta_0(x)=0\quad \mbox{for\!}\quad y=d,\label{eq:30}\\
&\dUd\pd{y}\psi_0(x,y)+(1-\mddU\dUd)\eta_0(x)=0\quad \mbox{for\!}\quad y=d.\label{eq:40}
\end{align}
The solution is sought in the form
\begin{equation}
\eta_0(x):=\cos(\taust x),\quad
\psi_0(x,y):=-\dUd\cos(\taust x)\gamm(y;\taust)
\label{eq:eta_psi0}
\end{equation}
to satisfy \eqref{eq:10}--\eqref{eq:30}. From \eqref{eq:40}, we obtain
\begin{equation*}
(1-\mddU\dUd)\cos(\taust x)-\gamm'(d;\taust)\dUd^2\cos(\taust x)=0.
\end{equation*}
This equality holds because of the dispersion relation $\Disp(\taust)=0$ (see \eqref{eq:dispdef}).

Let us now take the terms of order $\Obig(t^2)$ in \eqref{eq:1}, \eqref{eq:2}, \eqref{eq:3}, and \eqref{eq:4-0}. This leads to the problem
\begin{align}
&\bigl(\pd[2]{x}+\pd[2]{y}\bigr)\psi_{1}(x , y)=0\quad\mbox{for\!}\quad y\in (0,d),\label{eq:11}\\
&\psi_1(x,0)=0,\label{eq:21}\\
&\psi_1(x,d)+\dUd\eta_1(x)+\eta_0(x)\pd{y}\psi_0(x,y)-\tfrac12 \mddU[\eta_0(x)]^2=0\quad \mbox{for\!}\quad y=d,\label{eq:31_}\\
&\dUd\pd{y}\psi_1(x,y)+(1-\mddU\dUd)\eta_1(x) +\eta_0(x)
\bigl(\dUd\pd[2]{y}-\mddU\pd{y}\bigr)\psi_0(x,y) \notag\\
&\kern4mm{}+\tfrac12[\pd{y}\psi_0(x,y)]^2 +
\tfrac12[\pd{x}\psi_0(x,d)]^2+\tfrac12[\mddU\eta_0(x)]^2
=0\quad \mbox{for\!}\quad y=d.\label{eq:41_}
\end{align}
After substitution of \eqref{eq:eta_psi0} into  equations \eqref{eq:31_} and \eqref{eq:41_}, we get
\begin{gather}
\psi_1(x,d)+\dUd\eta_1(x)+A_1(1+\cos(2\taust x))=0,\label{eq:31}\\
\dUd\pd{y}\psi_1(x,y)\bigr|_{y=d}+(1-\mddU\dUd)\eta_1(x)+B_1(1+\cos(2\taust x))+C_1=0,\label{eq:41}
\end{gather}
where
\begin{equation*}
\begin{gathered}
A_1:=-\tfrac{1}{4}\mddU - \tfrac{1}{2}\dUd\gamm'(d;\taust),\quad C_1:=\tfrac{1}{2}\taust^2 \dUd^2,\\
B_1:=-\tfrac{3}{4}\taust^2\dUd^2+\tfrac{1}{4}\mddU^2+\tfrac{1}{2}\mddU\dUd\gamm'(d;\taust)+\tfrac{1}{4}\dUd^2\gamm'(d;\taust)^2.
\end{gathered}
\end{equation*}

We seek solution in the form
\begin{equation}
\label{eq:eta_psi1}
\begin{gathered}
\eta_1(x):=a_1 + b_1 \cos(2\taust x);\\
\psi_1(x,y):=c_1 y + d_1 \cos(2\taust x) \gamm(y;2\taust),
\end{gathered}
\end{equation}
which obviously satisfies \eqref{eq:11}, \eqref{eq:21}. Substituting \eqref{eq:eta_psi1} into \eqref{eq:31} and  \eqref{eq:41}, and collecting coefficients of $\cos(2\taust x)$ and of $1$, we obtain two linear systems for the unknown coefficients $a_1$, $c_1$, and $b_1$, $d_1$:
\begin{gather*}
\dUd a_1+dc_1+A_1=0,\\
(1-\mddU\dUd)a_1+\dUd c_1+B_1+C_1=0,
\end{gather*}
and
\begin{gather*}
\dUd b_1+d_1+A_1=0,\\
(1-\mddU\dUd)b_1+\dUd\gamm'(d;2\taust)d_1+B_1=0.
\end{gather*}
Therefore
\begin{equation}
\begin{aligned}
a_1&=[d\Disp(0)]^{-1}\bigl(A_1\dUd-(B_1+C_1)d\bigr),\\
b_1&=[\Disp(2\taust)]^{-1}\bigl(A_1\dUd\gamm'(d;2\taust)-B_1\bigr),\\
c_1&=[d\Disp(0)]^{-1}\bigl(\dUd(B_1+C_1)-A_1(1-\mddU\dUd)\bigr),\\
d_1&=[\Disp(2\taust)]^{-1}\bigl(B_1-A_1(1-\mddU\dUd)\bigr).
\end{aligned}\label{eq:abcd1}
\end{equation}

Finally, we consider the terms of order $\Obig(t^3)$ in \eqref{eq:1}, \eqref{eq:2}, \eqref{eq:3}, and \eqref{eq:4-0}, and obtain the following problem:
\begin{align}
&\bigl(\pd[2]{x}+\pd[2]{y}\bigr)\psi_{2}(x , y)+2\lambda_2\pd[2]{x}\psi_0(x,y)=0\quad\mbox{for\!}\quad y\in (0,d),\label{eq:12}\\[2mm]
&\psi_2(x,0)=0,\label{eq:22}\\[2mm]
&\psi_2(x,d)+\dUd\eta_2(x)+\eta_0(x)\pd{y}\psi_1(x,y)+\eta_1(x)\pd{y}\psi_0(x,y) \notag\\
&\kern4mm{} +\tfrac12[\eta_0(x)]^2\pd[2]{y}\psi_0(x,y) -
\mddU\eta_0(x)\eta_1(x)=0\quad \mbox{for\!}\quad y=d,\label{eq:32_}\\[2mm]
&\dUd\pd{y}\psi_2(x,y)+(1-\mddU\dUd)\eta_2(x)
+\eta_0(x)\bigl(\dUd\pd[2]{y}-\mddU\pd{y}\bigr)\psi_1(x,y)\notag\\
&\kern4mm{}
+\bigl(\pd{x}\psi_0(x,d)\pd{x}+\pd{y}\psi_0(x,y)\pd{y}\bigr)\psi_1(x,y) +
\mddU^2\eta_0(x)\eta_1(x)\notag\\
&\kern4mm{}
+\tfrac12 \dUd[\eta_0(x)]^2\pd[3]{y}\psi_0(x,y)+\bigl(-\mddU[\eta_0(x)]^2+\dUd\eta_1(x) \bigr)\pd[2]{y}\psi_0(x,y) \notag\\
&\kern4mm{}+
\eta_0(x)\bigl(
\pd{x}\psi_0(x,d)\pd{x}+\pd{y}\psi_0(x,y)\pd{y}\bigr)\pd{y}\psi_0(x,y) \notag\\
&\kern34mm{}-\mddU\eta_1(x)\pd{y}\psi_0(x,y)
=0\quad \mbox{for\!}\quad y=d.\label{eq:42_}
\end{align}
After substitution of solutions \eqref{eq:eta_psi0} and \eqref{eq:eta_psi1} into \eqref{eq:32_}, \eqref{eq:42_}, the equations
take the form
\begin{gather}
\psi_2(x,d)+\dUd\eta_2(x)+A_{2}\cos(\taust x)+B_{2}\cos(3\taust x)=0,\label{eq:32}\\
\dUd\pd{y}\psi_2(x,y)\bigr|_{y=d}+(1-\mddU\dUd)\eta_2(x)+C_2\cos(\taust x)+D_2\cos(3\taust x)=0,\label{eq:42}
\end{gather}
where 
\begin{equation}
\label{eq:ABCD2def}
\begin{aligned}
A_2:={}& (a_1+\tfrac12 b_1)\Xi +c_1
  + \tfrac12 \gamm'(d;2\taust)d_1 -\tfrac38\dUd \taust^2,\\
B_2:={}& \tfrac12 b_1\Xi + \tfrac12\gamm'(d;2\taust)d_1-\tfrac18\dUd \taust^2,\\
C_2:={}&  {-(a_1+\tfrac12 b_1)} ( \mddU\Xi + \dUd^2\taust^2 ) + c_1\Xi+ d_1 [\dUd\taust^2    + \tfrac12\Xi\gamm'(d;2\taust)]\\
&{} +\tfrac34\mddU\dUd\taust^2 + \tfrac58\dUd^2 \taust^2\gamm'(d;\taust) ,\\
D_2:={}&
{-\tfrac12 b_1 (\mddU\Xi + \dUd^2\taust^2)}
+ d_1\bigl[3\dUd\taust^2+ \tfrac12\Xi\gamm'(d;2\taust)\bigr]
+\tfrac14\mddU\dUd \taust^2  - \tfrac18\taust^2\dUd^2\gamm'(d;\taust)
\end{aligned}\!\!\!\!\!
\end{equation}
and  $\Xi:={-\mddU} - \dUd\gamm'(d;\taust)$.
Taking into account \eqref{eq:32} and \eqref{eq:42}, we seek $\eta_2$ in the form:
\begin{equation}
\eta_2(x):=a_2 \cos(\taust x) + b_2 \cos(3\taust x).\label{eq:eta2def}
\end{equation}
Here, in contrast to the above steps, the potential $\psi_2$ should satisfy the non-homogeneous equation \eqref{eq:12}. It is easy to verify that the function defined by\vskip-3mm
\begin{equation}
\psi_2(x,y):=
-\dUd\lambda_2\cos(\taust x) \,y\, \gamm'(y;\taust)
+c_2 \cos(\taust x)\gamm(y;\taust)
+d_2 \cos(3\taust x)\gamm(y;3\taust),\label{eq:psi2def}
\end{equation}
satisfies \eqref{eq:12} and \eqref{eq:22}.
Substituting \eqref{eq:eta2def} and \eqref{eq:psi2def} into \eqref{eq:32} and  \eqref{eq:42}, we collect the coefficients of
 $\cos(\taust x)$ and $\cos(3\taust x)$. Equating the coefficients to zero yields two linear systems for the unknown coefficients $a_2$, $\lambda_2$ (where $c_2$ is considered as a free parameter) and $b_2$, $d_2$:
\begin{gather*}
\dUd a_2- d\dUd\gamm'(d;\taust)\lambda_2+c_2+A_2=0,\\
(1-\mddU\dUd)a_2 - \dUd^2[d \taust^2 +\gamm'(d;\taust)]\lambda_2+\dUd\gamm'(d;\taust)c_2+C_2=0,
\end{gather*}
and\vskip-7mm
\begin{gather*}
\dUd b_2 + d_2 + B_2=0,\\
(1-\mddU\dUd)b_2+\dUd\gamm'(d;3\taust)d_2+D_2=0.
\end{gather*}
These systems have the following solutions:
\begin{align}
a_2&=-\frac{c_2}{\dUd}+\frac{-A_2 \dUd[d\taust^2+\gamm'(d;\taust)]+C_2d\gamm'(d;\taust)}
{\dUd^2[d\taust^2+\gamm'(d;\taust)]-d(1-\mddU\dUd)\gamm'(d;\taust)},\label{eq:a2def}\\
\lambda_2&=\frac{-(1-\mddU\dUd)A_2+C_2}
{\dUd^3[d\taust^2+\gamm'(d;\taust)]-d\dUd(1-\mddU\dUd)\gamm'(d;\taust)}.
\label{eq:lambda2def}\\
b_2&
 =[\Disp(3\taust)]^{-1}\bigl(-B_2\dUd\gamm'(d;3\taust)+D_2\bigr),\notag\\
d_2&=
[\Disp(3\taust)]^{-1}\bigl(B_2(1-\mddU\dUd)-\dUd D_2\bigr).\notag
\end{align}
The appearance of the free parameter $c_2$ in \eqref{eq:psi2def} and \eqref{eq:a2def} is rather natural,
it reflects the non-uniqueness in the choice of the parameter $t$ in the parametrization of the branch.

The lengthy algebraic manipulations of this section were carried out with the aid of Maxima computer algebra system; this applies to some derivations in subsequent sections, particularly those leading to \eqref{eq:mu2as_d_large}--\eqref{eq:mu2as_ds} and \eqref{eq:Basympt}.

\subsection{The second eigenvalue of the Fr\'{e}chet derivative (\ref{Au5a}), (\ref{Dec27a})}
\label{sect:mu2}

We write \eqref{eq:etaexpdef} and \eqref{eq:psiexpdef0} as
$$
\eta(x,t)=d+t\eta_*(x,t),\quad\eta_*(x,t)=\eta_0+t\eta_1(x)+t^2\eta_2(x)+\Obig(t^3)
$$
and
$$
\psi(x,y,t)=U(y)+t\psi_*(x,y,t),\quad\psi_*(x,y,t)=\psi_0(x,y)+t\psi_1(x,y)+t^2\psi_2(x,y)+\Obig(t^3).
$$
The eigenvalue problem (\ref{Au5a}), (\ref{Dec27a}) has the form
\begin{equation}
\label{eq:eigenproblem}
\begin{aligned}
&(\lambda^2\partial_x^2+\partial_y^2)v=0\quad\mbox{in\!}\quad D_\eta,\\
&\lambda^2\psi_xv_x+\psi_yv_y-(1+\lambda^2\psi_x\psi_{xy}+\psi_y\psi_{yy})\frac{v}{\psi_y}=\mu \frac{v}{\psi_y}\quad\mbox{for\!}\quad y=\eta(x),\\
&v(x,0)=0.
\end{aligned}
\end{equation}

Differentiating \eqref{eq:1}--\eqref{eq:4-0} with respect to $t$, we get
\begin{align*}
&(\lambda^2\partial_x^2+\partial_y^2)\psi_t+4\lambda_2t^2\partial_x^2\psi_*=\Obig(t^3),\\
&\psi_x(\psi_{xy}\eta_t+\psi_{xt})+\psi_y(\psi_{yy}\eta_t+\psi_{yt})+\eta_t
=\Obig(t^3)\quad\mbox{for\!}\quad y=\eta(x,t),\\
&\psi_y\eta_t+\psi_t=0\quad\mbox{for\!}\quad y=\eta(x,t),\\
&\psi(x,0,t)=0.
\end{align*}
Using that $\eta_t=-\psi_t/\psi_y$ and that $\psi_*=\psi_t+\Obig(t)$, we rewrite the problem with respect to the function $\psi_t$
\begin{equation}\label{Dec6c}
\begin{aligned}
&(\partial_x^2+\partial_y^2)\psi_t+6\lambda_2t^2\partial_x^2\psi_t=\Obig(t^3),\\
&\psi_x\partial_x\psi_{t}+\psi_y\partial_y\psi_{t}-\check{\rho}(x,t)\psi_t=\Obig(t^3)\quad\mbox{for\!}\quad y=\eta(x,t),\\
&\psi(x,0,t)=0,
\end{aligned}
\end{equation}
where
\begin{equation*}
\check{\rho}(x,t)=\frac{1+\psi_{xy}\psi_x+\psi_{yy}\psi_y}{\psi_y}.
\end{equation*}
To obtain the asymptotics of the problem \eqref{Dec6c}, we proceed as follows. Let
\begin{equation}
\label{eq:vdef}
v=\psi_t+h \cos(\taust x),\quad h=-2\lambda_2t^2\dUd \,y\,\frac{\taust\cosh(y\taust)}{\sinh(d\taust)}.
\end{equation}
Then, in view of \eqref{eq:eta_psi0}, \eqref{eq:12}, we have
\begin{equation*} 
((1+2\lambda_2t^2)\partial_x^2+\partial_y^2)v=\Obig(t^3),
\end{equation*}
and
$$
\psi_x\partial_xv+\psi_y\partial_yv-\check{\rho}(x,t)v-\Bigl(\dUd h_y-\frac{(1-a\dUd)}{\dUd}h\Bigr)\cos(\taust x)=\Obig(t^3).
$$
Since
$$
\dUd h_y-\frac{(1-a\dUd)}{\dUd}h=-2\dUd^2\taust\lambda_2t^2\Bigl(\taust d+\frac{\cosh\taust d}{\sinh\taust d}-\frac{(1-a\dUd)}{\dUd^2}d\frac{\cosh \taust d}{\sinh \taust d} \Bigr),
$$
we find
\begin{equation}
\mu_2=-2\dUd^2\taust\lambda_2\Bigl(\taust d+\Bigl(1-\frac{(1-a\dUd)d}{\dUd^2}\Bigr)\frac{\cosh \taust d}{\sinh \taust d} \Bigr).
\label{eq:mu2def}
\end{equation}
We have
\begin{equation}\label{Dec6ca}
H(z):=z+\Bigl(1-z\frac{\cosh z}{\sinh z}\Bigr)\frac{\cosh z}{\sinh z}>0\quad \mbox{for\!}\quad z>0,
\end{equation}
because $H(0)=0$ and
\begin{equation*}
H'(z)=\frac{2}{\sinh(z)^3}(z\cosh(z)-\sinh(z))>0\quad\mbox{for\!}\quad z>0.
\end{equation*}
Besides, due to the dispersion equation
$$
\frac{(1-a\dUd)d}{\dUd^2}=z\frac{\cosh z}{\sinh z},\quad z=d\taust.
$$
Therefore, we can write $\mu_2$ as
$$
\mu_2=-A\lambda_2,
$$
where 
$$
A=2\dUd^2\taust H(\taust d),
$$
and hence $A$ is positive due to positivity of the function $H$.

Consider now asymptotics of $\mu_2$. Using \eqref{eq:mu2def}, along with the expressions \eqref{eq:abcd1}, \eqref{eq:ABCD2def}, \eqref{eq:lambda2def} defining $\lambda_2$, one obtains dependence of $\mu_2$ on $a$, $d$ and $\taust$. Then, asymptotic analysis can be performed based on asymptotics of $\taust$ presented in Appendix~A.

First, let \mbox{$d\to\infty$} for a fixed value of \mbox{$a\neq0$}. Substituting the asymptotic representation \eqref{eq:taudefinf} of $\taust$  into the expression of $\mu_2$, we find
\begin{equation}
\mu_2 = m a^2 d^{-1}+\Obig(d^{-2})\quad\mbox{as\!}\quad d\to\infty,
\label{eq:mu2as_d_large}
\end{equation}
where
\begin{equation}
m=\frac{q_1^6-11\,q_1^4+28\, q_1^2-16}{8\,q_1^2}\approx -0.406748...,\label{eq:m1}
\end{equation}
and $q_1$ is defined by \eqref{eq:C1def}.

Now turn to the asymptotic expansion of $\mu_2$ near the critical depth $\dcrit$. We introduce the small parameter $\epsilon = d - \dcrit$, substitute the asymptotics for $\taust$ given in \eqref{eq:taucritexpansion} and expand the resulting expressions in a Taylor series in $\epsilon$. Throughout this process, we reduce even powers of the vorticity parameter by $a^{2} = 4(1-d_c^3)/d_c^4$, which is the critical depth equation \eqref{Ju8b}. As a result of these lengthy but straightforward symbolic computations, we arrive at
\begin{equation}
\mu_2=
\frac{5(4-\dcrit^3)}{12\,\dcrit^4}\,\epsilon^{-1}+
\frac{47\,\dcrit^6+15\,a\dcrit^5-361\,\dcrit^3-195\,a\dcrit^2+422}{30\dcrit^5(\dcrit^3+a\dcrit^2-2)}
+
\Obig(\epsilon),
\label{eq:mu2as_dcrit}
\end{equation}
Since $\dcrit\leq 1$, the coefficient of $\epsilon^{-1}$ is positive.

Let now $a>0$. Using \eqref{eq:taustasinf}, we can write the asymptotics of $\mu_2$ as $d\to \ds=\sqrt{2/a}$:
\begin{equation}
\mu_2=
-\frac{2}{a^4}(d-\ds)^{-4}+\Obig\bigl((d-\ds)^{-3}\bigr)\quad \mbox{as\!}\quad d-\ds\to0.
\label{eq:mu2as_ds}
\end{equation}


\begin{figure}[t!]
\centering\vspace{1.25mm}
 \SetLabels
 \L (-0.06*0.23) \rotatebox{90}{$\operatorname{sgn}(\mu_2(d))\log(1+|\mu_2(d)|)$}\\
 \L (0.9*-0.02) $d$\\
 \endSetLabels
 \leavevmode\AffixLabels{\includegraphics[width=90mm]{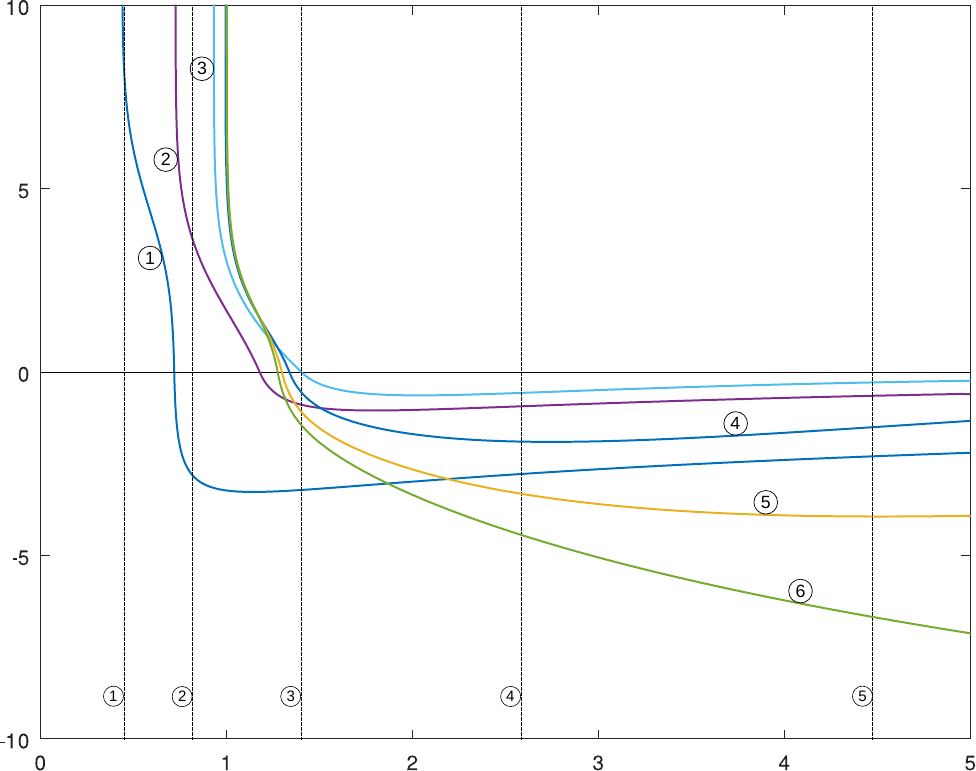}}\vspace{1mm}
 \caption{Dependence of $\mu_2$ on $d$ in a semilogarithmic scale for fixed $a=-10$, $-3$, $a_0$, $-0.3$, $-0.1$, and $0$ (plots $1,\dots,6$).}
\label{fig:logmu2_aneg}
\end{figure}

From the asymptotic formulas \eqref{eq:mu2as_d_large}, \eqref{eq:mu2as_dcrit}, \eqref{eq:mu2as_ds} it follows that $\mu_2$ tends to $+\infty$ as $d\to \dcrit+0$. If $a>0$, then $\mu_2\to-\infty$ as $d\to \ds$ and hence $\mu_2$ changes sign on the interval $(\dcrit,\ds)$. If $a<0$, then $\mu_2$ is negative for large $d$ in view of \eqref{eq:mu2as_d_large} and \eqref{eq:m1}, which implies a change of sign in $(\dcrit,+\infty)$. In both cases we denote the point where the sign change occurs by $d_0(a)$. The dependence of the zero $d_0(a)$ of $\mu_2$, obtained numerically, is shown in Fig.~\ref{fig:mu2_pos_cf}.

Figure~\ref{fig:logmu2_aneg} and~\ref{fig:logmu2_apos} present numerical results illustrating the dependence of $\mu_2$ on $d$. 
Specifically, these figures show $\operatorname{sgn}(\mu_2(d))\log(1+|\mu_2(d)|)$ for fixed values of $a$. In Fig.~\ref{fig:logmu2_aneg} the dependence is plotted for $a=-10$, $-3$, $a_0$, $-0.3$, $-0.1$, and $0$ (the curves are marked by numbers $1,2,\dots,6$, respectively). In Fig.~\ref{fig:logmu2_apos} the dependence is plotted for $a=5$, $1.5$, $0.5$, $0.25$, and $0.15$ (the curves are marked by numbers $1,2,\dots,5$, respectively). Dotted lines show the position of the corresponding values $\sqrt{2/|a|}$.

\begin{figure}[t!]
\centering\vspace{2.25mm}
 \SetLabels
 \L (-0.06*0.23) \rotatebox{90}{$\operatorname{sgn}(\mu_2(d))\log(1+|\mu_2(d)|)$}\\
 \L (0.9*-0.02) $d$\\
 \endSetLabels
 \leavevmode\AffixLabels{\includegraphics[width=90mm]{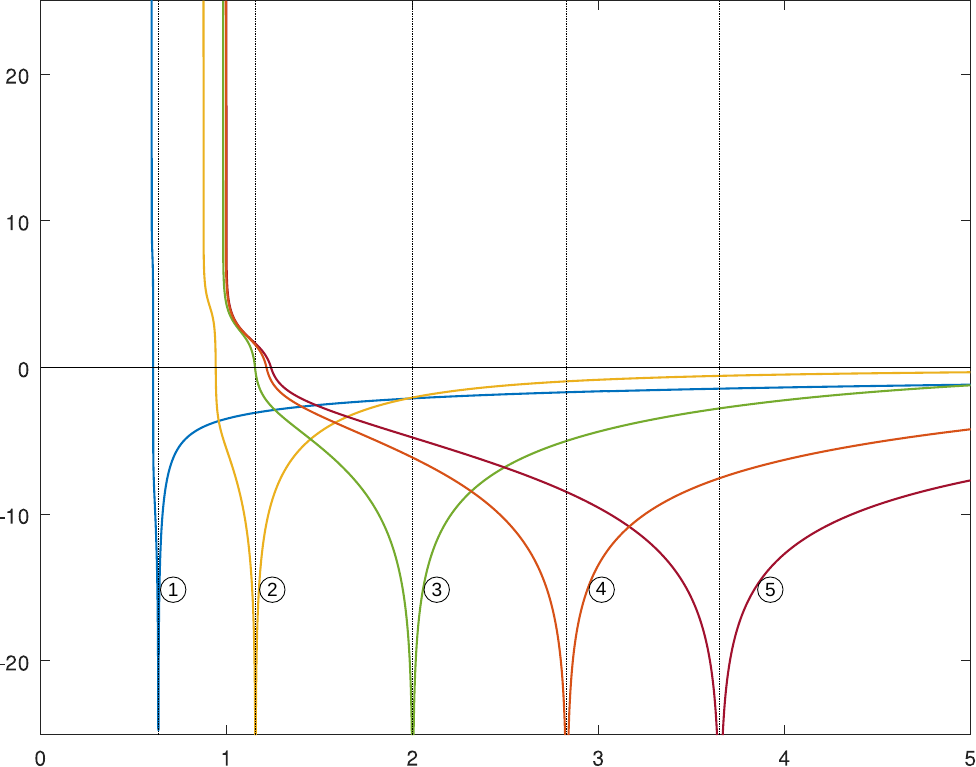}}\vspace{1mm}
 \caption{Dependence of $\mu_2$ on $d$ in a semilogarithmic scale for fixed $a=5$, $1.5$, $0.5$, $0.25$, and $0.15$ (plots $1,\dots,5$).}
\label{fig:logmu2_apos}
\end{figure}

The numerical results presented in Fig.~\ref{fig:logmu2_apos} support an important hypothesis concerning the sign of $\mu_2$. Specifically, for $a>0$, $\mu_2$ is consistently negative whenever $d\geq\ds(a)$. This threshold condition corresponds to the presence of a stagnation point in the laminar flow ($U'(y)$ changes sign as stated in Sect.~\ref{SAu13a}): at the value $d=\ds(a)$, the stagnation point first appears at the surface, whereas for $d>\ds(a)$, it is located beneath the surface.
At the same time, from Fig.~\ref{fig:mu2_pos_cf}  and Fig.~\ref{fig:logmu2_aneg}, one can note that for any $a\leq a_0\approx-1.01803$, there is an interval $d\in(\ds,d_0)$, where $\mu_2(a,d)$ is positive and, at the same time, there exists a stagnation point in the laminar flow. We denote the domain of such points $(a,d)$ by $M_+$, it is shown by color in Fig.~\ref{fig:mu2_pos_cf}.

We can also support this property analytically, by proving that $\mu_2(a,d)>0$ for some $(a,d)\in \Upsilon_-$ (see Fig.~\ref{fig:Udiff0}), where $\Upsilon_-$ is defined by
$$
 a\leq -\frac{2}{d^2}.
$$
Consider the curve 
\begin{equation}
\label{eq:acurve}
a=-\frac{4}{d^2}
\end{equation}
in $\Upsilon_-$.

Let $d$ be sufficiently small. Substitute \eqref{eq:acurve} into the dispersion relation $\Disp(\taust)=0$ along with
\begin{equation}
\taust=\taust(d)=\nmo d^{-1}+n_0+n_1 d+n_2 d^2+\dots,
\label{eq:taustsa}
\end{equation}
we find
\begin{gather*}
\nmo=\frac43\tanh \nmo,\quad\mbox{i.e.}\quad \nmo\approx 1.034021...,\\
n_0=n_1=0,\quad n_2=\frac{\nmo}{9\nmo^2-4}.
\end{gather*}
Substituting \eqref{eq:taustsa} into \eqref{eq:mu2def}, after some algebra we find that on the curve $a(d)=-4/d^2$
\[
\mu_2=M d^{-5}+\Obig(d^{-2})\quad\mbox{as}\quad d\to0,
\]
where
\[
M=\frac{729\,\nmo^6-3078\,\nmo^4+3168\,\nmo^2-512}{54\,\nmo^2}\approx 4.287466...>0.
\]
This shows that $\mu_2(a,d)$ can be positive when the laminar flow $U(y)$ on $(0,d)$ has a stagnation point inside the interval in the case $U'(0)\leq 0$, i.e.\ a counter current appears near the bottom.

\begin{figure}[t!]
\centering\vspace{1.25mm}
 \SetLabels
 \L (0.9*-0.01) $a$\\
 \L (1.01*0.87) $Y_{*,\max}$\\
 \L (-0.08*0.34) \rotatebox{90}{$Y_*(a,d_0(a))$}\\
 \endSetLabels
 \leavevmode\AffixLabels{\includegraphics[width=72mm]{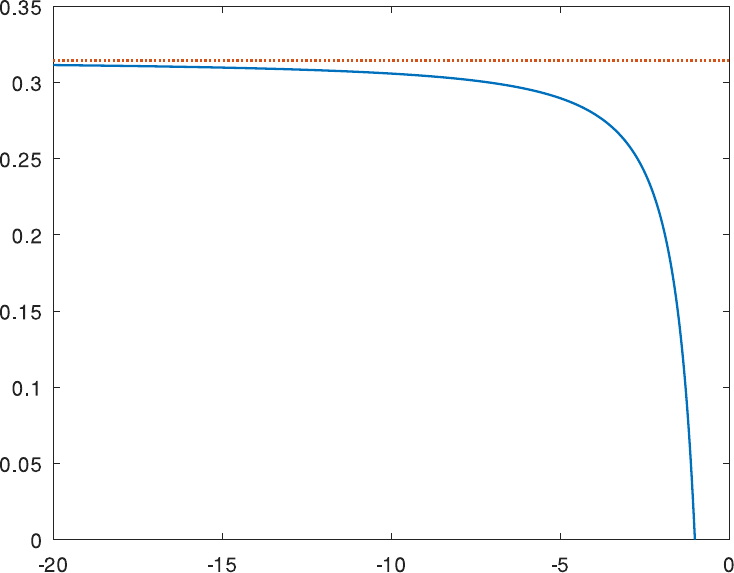}}\vspace{1mm}
 \caption{Dependence of $Y_*(a,d_0(a))$ on $a<a_0\approx-1.01803$. The dotted line shows the limit level $Y_{*,\max}\approx 0.314507...$}
\label{fig:ystar_mu2=0}
\end{figure}

We can estimate numerically how close the extremum point can be to the bottom when $(a,d)\in M_+$.  
In Sect.~\ref{SAu13a} we introduced the relative depth of the extremum $U'(y_*)=0$ for given $a$ and $d$:
\begin{equation*}
Y_*(a,d)=y_*/d=\frac{ad^2+2}{2ad^2}.
\end{equation*}
Since $Y_*(a,d)$ is monotone with respect to $d$,  its maximum for $(a,d)\in M_+$ is attained on $Y_*(a,d_0(a))$ (on the upper part of $\partial M_+$). This dependence is shown in Fig.~\ref{fig:ystar_mu2=0}. According to the computations,
\begin{equation*}
Y_{*,\max}:=\sup_{(a,d)\in M_+}Y_*(a,d)=\lim_{a\to-\infty}Y_*(a,d_0(a))\approx 0.314507...
\end{equation*}

\subsection{The first eigenvalue of the Fr\'{e}chet derivative. The formal stability}\label{SFeb7c}

According to Sect.~\ref{SectJ22a}, the first eigenvalue of the problem (\ref{Ja22a}) is
$\mu_{00}=\Disp(0)$ and the corresponding eigenfunction is
$$
u_{00}(y)=\gamm(y;0)=\frac{y}{d}.
$$
Let us find the asymptotics of the first eigenvalue and corresponding eigenfunction of the problem (\ref{eq:eigenproblem}).
We write the eigensolution in the form
\begin{equation*}
u(x,y)=\gamm(y;0)+t u_0(x,y)+\Obig(t^2).
\end{equation*}
and look for the eigenvalue 
\begin{equation}
\mu_0(t)=\Disp(0)+\mu_{01}\,t+\Obig(t^2).
\label{eq:mu0expdef}
\end{equation}

From the first equation \eqref{eq:eigenproblem}, we have
\begin{equation*}
0=t(\pd[2]{x}+\pd[2]{y})u_0(x,y)+\Obig(t^2).
\end{equation*}
Using \eqref{eq:etaexpdef},
 we can use the Taylor expansion for $u$
\begin{equation*}
u(x,\eta(x)) = u(x,d) + \zeta(x)\pd{y}u(x,y)\bigr|_{y=d} + \tfrac12\zeta(x)^2 \pd[2]{y}u(x,y)\bigr|_{y=d} +\dots
\end{equation*}
and, similarly, for its derivatives and derivatives of $\psi$ (see \eqref{eq:psix}, \eqref{eq:psiy}).
 We substitute these expressions into
\begin{equation}
\lambda^2\psi_x\psi_y u_x+(\psi_y)^2 u_y-(1+\lambda^2\psi_x\psi_{xy}+\psi_y\psi_{yy})u-\mu u=0
\label{eq:mu-cond2-mod}
\end{equation}
and equate to zero coefficients of powers of $t$. The term of order $\Obig(1)$ in \eqref{eq:mu-cond2-mod} is equal to zero due to our choice of the first term in the right-hand side of \eqref{eq:mu0expdef}.
Consider the term of order $\Obig(t)$ in \eqref{eq:mu-cond2-mod}. Using representations \eqref{eq:eta_psi0} of $\psi_0$ and $\eta_0$, the above formula for $\mu_0(t)$ and assuming
\[
u_0(x,y)=p_0\cos(\taust x)\gamm(y;\taust),
\]
we obtain, after some algebra and application of the dispersion relation,
\begin{equation*}
\cos(\taust x)\biggl\{-p_0\Disp(0)+\dUd^2\taust^2-\frac{\mddU}{\dUd}-\frac{\dUd^2}{d^2}-\frac{2}{d}\biggr\}-\mu_{01}=0,
\end{equation*}
and, thus, we find that $\mu_{01}=0$ and
\begin{equation}
p_0 = \frac{d^2\dUd^3\taust^2-\mddU d^2-\dUd^3-2d\dUd}{d^2\dUd\,\Disp(0)}.
\label{eq:p0def}
\end{equation}

Introduce now the Dirichlet--Neumann operator $\DN=\DN(\psi,\eta)$. If $h$ is a \mbox{$\Lambda_*$-}pe\-ri\-od\-ic even function from $C^{2,\alpha}(\mathbb{R})$, then $w$ is defined as the solution of the boundary value problem
\begin{equation*} 
\begin{aligned}
& (\lambda^2\partial_x^2+\partial_y^2)w=0\quad\mbox{in\!}\quad D_\eta,\\
&w=0\quad\mbox{for\!}\quad y=0,\\
&w=h\quad\mbox{for\!}\quad y=\eta(x),
\end{aligned}
\end{equation*}
and
\begin{equation*} 
\DN h=\lambda^2\psi_xw_x+\psi_yw_y\quad\mbox{on\!}\quad S_\eta.
\end{equation*}
Using the Green formula
$$
0=\int_{D_\eta} \bigl[(\lambda^2\partial_x^2+\partial_y^2)w_1\,w_2-w_1(\lambda^2\partial_x^2+\partial_y^2)w_2\bigr]\,\D x \D y=\int_{S_\eta}\bigl(\DN h_1 \, h_2-h_1\DN h_2\bigr)\frac{\D x}{\psi_y},
$$
where $w_i$ corresponds to $h_i$, one can show that the operator $\DN$ is self-adjoint with respect to the inner product
 \begin{equation*}
\langle h_1,h_2\rangle_1:=\int_{-\Lambda_*/2}^{\Lambda_*/2}h_1 h_2 \frac{\D x}{\psi_y}.
\end{equation*}
Let
$$
 v(x,y)=\psi_0(x,y)+\Obig(t)
$$
be the eigenfunction corresponding to the second eigenvalue $\widehat{\mu}_2(t)=\mu_2t^2+\Obig(t^4)$ (see \eqref{eq:vdef}).

We introduce
$$
\widehat{v}(x)=M_1^{-1/2}v(x,d)\quad\mbox{and}\quad\widehat{u}(x)=M_0^{-1/2}u(x,d),
$$
which are normalized eigenfunctions with respect to the inner product
 \begin{equation}\label{Feb13a}
 \langle h_1,h_2\rangle_2:=\int_{-\Lambda_*/2}^{\Lambda_*/2}h_1 h_2 \frac{\D x}{\psi^2_y}.
 \end{equation}
Clearly the functions $u$ and $v$ are orthogonal with respect to the inner product (\ref{Feb13a}).  One can verify that
\begin{align*}
 M_1&=\langle v(\cdot,d),v(\cdot,d)\rangle_2=\int_{-\Lambda_*/2}^{\Lambda_*/2}\bigl(\gamm(d;\taust)\cos(\taust x)\bigr)^2 \,\frac{\D x}{\psi^2_y}+\Obig(t)=\frac{\Lambda_*}{2\kappa^2}+\Obig(t),\\
 M_0&=\langle u(\cdot,d),u(\cdot,d)\rangle_2=\int_{-\Lambda_*/2}^{\Lambda_*/2} \bigl(\gamm(d;0)\bigr)^2\,\frac{\D x}{\psi^2_y}+\Obig(t)=\frac{\Lambda_*}{\kappa^2}+\Obig(t).
\end{align*}

Let us introduce the operator
$$
{\mathcal A}h=\DN h-\frac{\widehat{\rho}}{\psi_y} h,
$$
where $\widehat{\rho}$ is defined by \eqref{eq:rhohatdef}.
Representing $h$ as $h=\alpha \widehat{u}+\beta\widehat{v}+\widehat{h}$, where $\widehat{h}$ is orthogonal to $\widehat{u}$ and $\widehat{v}$ with respect to the inner product (\ref{Feb13a}), we have
\begin{equation*}
{\mathcal A}h=\alpha\widehat{\mu}_0\frac{\widehat{u}}{\psi_y}+\beta\widehat{\mu}_2\frac{\widehat{v}}{\psi_y}+{\mathcal A}\widehat{h}
\end{equation*}
and, therefore,
\begin{equation}\label{Feb8a}
\langle {\mathcal A}h,h\rangle_1=
\alpha^2\widehat{\mu}_0+\beta^2\widehat{\mu}_2+\langle {\mathcal A}\widehat{h},\widehat{h}\rangle_1.
\end{equation}
If we denote by $\widehat{\mu}_3(t)$ the third eigenvalue of the operator ${\mathcal A}$, then
\begin{equation*}
\langle {\mathcal A}\widehat{h},\widehat{h}\rangle_1\geq \widehat{\mu}_3(t)\langle\widehat{h},\widehat{h}\rangle_2.
\end{equation*}
Assuming that
\begin{equation}\label{Feb8aa}
\int_{-\Lambda_*/2}^{\Lambda_*/2}h\,\frac{\D x}{\psi_y}= 0,
\end{equation}
let us study the positivity of the right-hand side of (\ref{Feb8a}). This positivity is called the formal stability according to \cite{CSst2}. 

First, we rewrite (\ref{Feb8aa}) as
\begin{equation*}
\int_{-\Lambda_*/2}^{\Lambda_*/2}h\,\psi_y(x,d)\frac{dx}{\psi^2_y(x,d)}=0.
\end{equation*}
We recall
$$
\psi_y(x,d)=\kappa-t\kappa\gamm'(d;\taust)\cos(\tau_*x)+t^2\psi_*(x,t),
$$
where $\psi_*(x,t)$ is analytic with respect to  $x$ and $t$ for small $t$. Using that
\begin{equation*}
u(x,d)=1+tp_0\cos(\taust x)+\Obig(t^2),\quad v(x,d)=\cos(\taust x)+\Obig(t),
\end{equation*}
we write
$$
\psi_y(x,d)=\kappa u(x,d)-t\kappa\mathcal{C} v(x,d)+\Obig(t^2),\quad
\mathcal{C}=p_0+\gamm'(d;\taust).
$$
Now the relation (\ref{Feb8aa}) takes the form
\begin{equation*}
\int_{-\Lambda_*/2}^{\Lambda_*/2}(\alpha \widehat{u}+\beta \widehat{v}+\widehat{h})\bigl(u-t\,\mathcal{C}v+\Obig(t^2)\bigr)\,\frac{\D x}{\psi^2_y}=0,
\end{equation*}
which implies
\begin{equation*}
\alpha M_0^{1/2} - t \beta \mathcal{C} M_1^{1/2}=\Obig\bigl(t^2\|\widehat{h}\|_2\bigr).
\end{equation*}
Substituting $\alpha$ from the last equation in (\ref{Feb8a}) and taking into account that $M_1/M_0=\tfrac12+\Obig(t)$, we get
\begin{equation*}
\langle {\mathcal A}h,h\rangle_1=t^2\mathcal{B}\beta^2+\Obig(t^3),
\end{equation*}
where
\[
\mathcal{B}:=\frac{\mathcal{C}^2}{2}\widehat{\mu}_0+\mu_2.
\]

\begin{figure}[t!]
\centering\vspace{1.25mm}
 \SetLabels
 \L (-0.02*0.83) $d$\\
 \L (0.91*-0.02) $a$\\
 \L (0.66*0.67) $B_+$\\
 \L (0.9*0.54) $\dcrit$\\
 \L (0.865*0.92) $\ds$\\
 \L (0.775*0.92) $\ds$\\
 \L (0.62*0.825) $d_0$\\
 \L (0.71*0.08) $a_0$\\
 \L (0.855*0.08) $a_1$\\
 \endSetLabels
 \leavevmode\AffixLabels{\includegraphics[width=85mm]{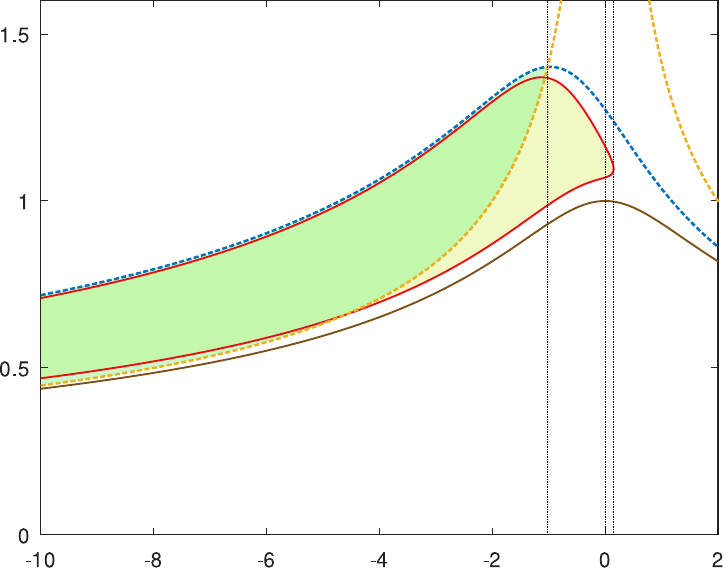}}\vspace{1mm}
 \caption{The formal stability (positiveness of $\mathcal{B}(a,d)$ takes place in the domain $B_+$ marked by color and bounded by solid line).  The domain $M_+$ is also shown in color and bounded by dashed lines ($\ds(a)$ and $d_0(a)$); see also Fig.~\ref{fig:mu2_pos_cf}.}
\label{fig:Bpos}
\end{figure}

Therefore, for positivity of the form for small $t$, we must require positiveness of $\mathcal{B}$. Taking into account that $\widehat{\mu}_0=\Disp(0)$, using \eqref{eq:p0def}, \eqref{eq:taucritexpansion} and \eqref{eq:mu2as_dcrit}, after some algebra we find
the asymptotics at fixed $a$ and $d\to\dcrit+0$:
\begin{equation}
\mathcal{B}=\mathcal{B}_{-1}(d-\dcrit)^{-1}+\mathcal{B}_{0}+
\Obig(d-\dcrit),
\label{eq:Basympt}
\end{equation}
where
\begin{equation*}
\mathcal{B}_{-1}:=\frac{\dcrit^3-4}{12\,\dcrit^4},\qquad
\mathcal{B}_{0}:=\frac{13\,\dcrit^6+15\, a \dcrit^5-209 \dcrit^3-195\, a \dcrit^2+358}{30\, \dcrit^5(2-\dcrit^3-a \dcrit^2)}.
\end{equation*}
The coefficient $\mathcal{B}_{-1}(a)$  is negative and also, using \eqref{eq:dcritasympt}, we find
\begin{equation}
\label{eq:B-1asympt}
\mathcal{B}_{-1}(a)=-\frac{a^2}{12}-\frac{|a|^{1/2}}{2^{5/2}}+\Obig(a^{-1})\quad\mbox{as\!}\quad a\to\infty.
\end{equation}
In view of \eqref{eq:dcritasympt}, we have 
$$
\mathcal{B}_{0}(a)=\frac{187\,|a|^{5/2}}{15\cdot 2^{7/2}}+\frac{175\,|a|}{48}+\Obig(a^{-1/2})\quad\mbox{as\!}\quad a\to-\infty,
$$
i.e.\ the second term of \eqref{eq:Basympt} is positive and grows faster than \eqref{eq:B-1asympt} as $a\to-\infty$. We note that the asymptotics of $\mathcal{B}_{0}(a)$ for positive $a$ is essentially different:
$$
\mathcal{B}_{0}(a)=-\frac{2^{1/2}\,a^{11/2}}{15}-\frac{7\,a^4}{15}
+\Obig(a^{5/2})\quad\mbox{as\!}\quad a\to+\infty.
$$

Numerical computations confirm that the function $\mathcal{B}(a,d)$ is positive in the domain $B_+$ shown in color in Fig.~\ref{fig:Bpos}, where other notation of  Fig.~\ref{fig:mu2_pos_cf} is used. The domain $M_+$ (where both $\mu_2(a,d)>0$ and a counter current appears near the bottom) is shown in color and bounded by dashed lines $\ds(a)=\sqrt{2/|a|}$ and $d_0(a)$ such that $\mu_2(a,d_0)=0$. In Fig.~\ref{fig:Bpos} the lower curve shows $\dcrit(a)$, $a_0\approx-1.01803$ and $a_1\approx0.15196$.

We should note that the boundary of $B_+$ is fairly close to the curve $d_0(a)$. Besides, due to the presence of the first negative singular term in asymptotics \eqref{eq:Basympt}, the boundary of $B_+$ is separated from the curve $\dcrit(a)$ for any value of~$a$.

\appendix

\renewcommand{\theequation}{A.\arabic{equation}}
\setcounter{equation}{0}

\section*{Appendix A. Asymptotics of $\taust$}

We recall that $\taust$ satisfies the dispersion relation
\begin{equation}
\Disp(\taust)=\dUd^2 \taust \coth(\taust d)+\mddU\dUd-1=0,
\label{eq:dispinz}
\end{equation}
where $\dUd=U'(d)=\frac{1}{d}-\frac{ad}{2}$.

First, let us  find the asymptotics of $\taust$ for large $d$ and $a\neq0$.  We seek $\taust$ in the form
\begin{equation}
\taust= q_1d^{-1} + q_2d^{-2} + \dots
\label{eq:taudefinf}
\end{equation}
Substituting this into  \eqref{eq:dispinz} and taking in consideration only terms of order $\Obig(d)$, we have
\[
\frac{a^2(q_1-2\tanh(q_1))}{4\tanh(q_1)}=0,
\]
and, thus, $q_1\approx 1.915008...$ is the solution to the equation
\begin{equation}
q_1=2\tanh(q_1).
\label{eq:C1def}
\end{equation}
Considering terms of order $\Obig(1)$ in $\Disp(\taust)$ and using that $\tanh(q_1)= q_1/2$, we obtain
\begin{equation*}
q_2=\frac{4q_1}{a^2(q_1^2-2)}.
\end{equation*}
Continuing this procedure, we can obtain further terms in (\ref{eq:taudefinf}).

Let us now find the asymptotics of $\taust$ near $\dcrit$, defined by \eqref{Ju8b}.
Introduce the small parameter \mbox{$\epsilon=d-\dcrit$} and consider the following asymptotic anzats:
\begin{equation}
\taust=s_1\epsilon^{1/2}+s_2\epsilon+
s_3\epsilon^{3/2}+s_4\epsilon^2+\dots
\label{eq:taucritexpansion}
\end{equation}
Substituting it into the dispersion relation, we find that the term of order $\Obig(1)$ is equal to zero
in view of definition of $\dcrit$. Taking terms of the order $\Obig(\epsilon)$ in the expression for $\Disp(\taust)$, we find
\begin{equation*}
s_1=\frac{\sqrt{3}\sqrt{a^{2} \dcrit^{4}+12}}{\sqrt{\dcrit}\dcrit( 2-a \dcrit^{2} )}.
\end{equation*}
The coefficient of $\epsilon^{3/2}$ in $\Disp(\taust)$ is equal to $s_1 s_2 (a \dcrit^2-2)^2/(6\dcrit)$ and, thus, we find $s_2=0$. Taking the terms of order $O(\epsilon^2)$, we have
\begin{equation*}
s_3=\frac{
4\sqrt{3}(14\,\dcrit^6+5\,a\dcrit^5-92\,\dcrit^3-50\,a\dcrit^2+84)}
{5\,\dcrit^{5/2}(2-a\dcrit^2)^3\sqrt{4-\dcrit^3}},
\end{equation*}
where  the relation \eqref{Ju8b} is used.
Taking the terms of  order $\Obig(\epsilon^{5/2})$, we get $s_1 s_4 (a \dcrit^2-2)^2/(6\, \dcrit)=0$ and, thus, $s_4=0$. Continuing this procedure, we can find other coefficients in \eqref{eq:taucritexpansion}.


Let now $a>0$. Consider the asymptotics of $\taust$ as $d\to \ds:=\sqrt{2/a}$ (when $\Disp(\taust)\to-1$ and hence $\taust\to\infty$). Since $\coth(\ds\taust)$ approaches 1 exponentially, we
can consider approximated dispersion relation
\begin{equation*}
\dUd^2 \taust+\mddU\dUd-1=0,
\end{equation*}
where $d=\ds+\varepsilon$, and write
\[
\taust=p_{-2}\varepsilon^{-2}+p_{-1}\varepsilon^{-1}+p_0+p_1\varepsilon^{1}+\dots
\]
Then after straightforward calculations, we arrive at the following asymptotics
\begin{equation}
\taust=\frac{1}{a^2\varepsilon^2}+\frac{1+\sqrt{2}\,a^{3/2}}{\sqrt{2}\,a^{3/2}\varepsilon}+\frac{2\sqrt{2}\,a^{3/2}-1}{8a}
+\Obig\bigl(\varepsilon\bigr)\quad\mbox{as}\quad\varepsilon=d-\ds\to0.
\label{eq:taustasinf}
\end{equation}

\section*{Acknowledgments}

\noindent The study of V.\,A.~Kozlov was carried out with the financial support of the Ministry of Science and Higher Education of the Russian Federation in the framework of a scientific project under agreement No.~075-15-2025-013.
The research of O.\,V.~Motygin is funded by the Ministry of Science and Higher Education of the Russian Federation through project No.~124040800009-8.

\printbibliography

\end{document}